\newtheorem{theorem}{Theorem}[section]
\newtheorem{lemma}[theorem]{Lemma}
\newtheorem{proposition}[theorem]{Proposition}
\theoremstyle{definition}
\newtheorem*{definition*}{Definition}
\theoremstyle{remark}
\newtheorem*{remark*}{Remark}
\numberwithin{equation}{section}
\begin{document}

\title[Two estimates on the distribution of zeros of $L'(s,\chi)$ under GRH]{Two estimates on the distribution of zeros of the first derivative of Dirichlet $L$-functions under the generalized Riemann hypothesis}


\author{Ade Irma Suriajaya}
\address{Graduate School of Mathematics, Nagoya University, Furo-cho, Chikusa-ku, Nagoya, 464-8602, Japan}
\email{m12026a@math.nagoya-u.ac.jp}
\thanks{This work was partly supported by the Iwatani Naoji Foundation and JSPS KAKENHI Grant Number 15J02325.}

\subjclass[2010]{Primary 11M06}
\keywords{Dirichlet $L$-functions, derivative, zeros}

\begin{abstract}
The number of zeros and the distribution of the real part of non-real zeros of the derivatives of the Riemann zeta function have been investigated by B. C. Berndt, N. Levinson, H. L. Montgomery, H. Akatsuka, and the author.
Berndt, Levinson, and Montgomery investigated the unconditional case, while Akatsuka and the author gave sharper estimates under the truth of the Riemann hypothesis.
In this paper, we prove similar results related to the first derivative of Dirichlet $L$-functions associated with primitive Dirichlet characters under the assumption of the generalized Riemann hypothesis.
\end{abstract}

\maketitle

\section{Introduction}

Zeros of the Riemann zeta function are also related to those of its derivatives $\zeta^{(k)}(s)$ for positive integer $k$.
For example, A. Speiser \cite{spe} proved that the Riemann hypothesis is equivalent to the statement that $\zeta'(s)$ has no non-real zeros to the left of the critical line.
In 2012, assuming the Riemann hypothesis, H. Akatsuka \cite[Theorems 1 and 3]{aka} showed that we can approximate the distribution of zeros of $\zeta'(s)$ as follows:
\begin{align*}
\sum_{\substack{\rho'=\beta'+i\gamma',\\ \zeta'\left(\rho'\right)=0,\, 0<\gamma'\leq T}} \left(\beta'-\frac{1}{2}\right) &= \frac{T}{2\pi}\log{\log{\frac{T}{2\pi}}} + \frac{1}{2\pi}\left(\frac{1}{2}\log{2} - \log{\log{2}}\right)T\\
&\quad\quad- \operatorname{Li}\left(\frac{T}{2\pi}\right) + O((\log{\log{T})^2}),
\end{align*}
where the sum is counted with multiplicity and
$$ \operatorname{Li}(x) := \int_2^x \frac{dt}{\log{t}}, $$
and
\begin{equation} \label{eq:n1t}
N_1(T) = \frac{T}{2\pi}\log{\frac{T}{4\pi}}-\frac{T}{2\pi}+O\left(\frac{\log{T}}{(\log{\log{T}})^{1/2}}\right),
\end{equation}
where $N_1(T)$ denotes the number of zeros of $\zeta'(s)$ with $0<\operatorname{Im}(s)\leq T$, counted with multiplicity.
These results are also extended to higher order derivatives by the author \cite[Theorems 1 and 3]{ade}.

These results of Akatsuka \cite[Theorems 1 and 3]{aka} and the author \cite[Theorems 1 and 3]{ade}, under the truth of the Riemann hypothesis, improve the error terms $O(\log{T})$ in the unconditional results obtained by N. Levinson and H. L. Montgomery \cite[Theorem 10]{lev} on the distribution of the real part of zeros and by B. C. Berndt \cite[Theorem]{ber} on the number of zeros of $\zeta^{(k)}(s)$. \\
%

We are interested in extending these results of Akatsuka \cite[Theorems 1 and 3]{aka} to Dirichlet $L$-functions.
We shall only consider Dirichlet $L$-functions associated with primitive Dirichlet characters $\chi$ modulo $q>1$, $L(s,\chi)$.
Note that there exists only one Dirichlet character modulo $1$ and the associated Dirichlet $L$-function is the Riemann zeta function, whose results are given in \cite{aka}.
The {\it generalized Riemann hypothesis} states that both $\zeta(s)$ and $L(s,\chi)$ satisfy the Riemann hypothesis, that is, all nontrivial zeros lie on the critical line $\operatorname{Re}(s)=1/2$.

Zeros of $L^{(k)}(s,\chi)$ have been studied by C. Y. Y{\i}ld{\i}r{\i}m \cite{yil} in 1996 including zero-free regions and the number of zeros.
Akatsuka and the author in their recent preprint \cite[Theorems 1, 2, 4, and 5]{as} improved the zero-free region on the left half-plane \cite[Theorem 3]{yil} and the number of zeros \cite[Theorem 4]{yil} shown by Y{\i}ld{\i}r{\i}m, for the case $k=1$.
We also obtained a result \cite[Theorem 6]{as} on the distribution of the real part of zeros and proved results \cite[Theorems 8 and 9]{as}, analogous to Speiser's theorem \cite{spe}, for Dirichlet $L$-functions. \\

Throughout this paper, for a given integer $q>1$, we denote by $m$ the smallest prime number that does not divide $q$.
%
Next, we let $\rho = \beta + i\gamma$ and $\rho' = \beta' + i\gamma'$ denote the zeros of $L(s,\chi)$ and $L'(s,\chi)$ in the right half-plane $\operatorname{Re}(s)>0$.
We know that $L(s,\chi)$ has only trivial zeros in $\operatorname{Re}(s)\leq0$.
We remark that zeros of $L'(s,\chi)$ satisfying $\operatorname{Re}(s)\leq0$ can also be regarded as``trivial" (see \cite[Theorems 1, 2, and 4]{as}).
Then we define $N_1(T,\chi)$ for $T\geq2$ as the number of zeros of $L'(s,\chi)$ satisfying $\operatorname{Re}(s)>0$ and $|\operatorname{Im}(s)|\leq T$, counted with multiplicity.

Our main theorems are as follows:
\begin{theorem} \label{cha1'}
Assume that the generalized Riemann hypothesis is true, then for $T\geq2$, we have
\begin{align*}
\sum_{\substack{\rho' = \beta' + i\gamma',\\ |\gamma'| \leq T}} \left( \beta' - \frac{1}{2} \right)
&= \frac{T}{\pi} \log{\log{\frac{qT}{2\pi}}} + \frac{T}{\pi} \left( \frac{1}{2} \log{m} - \log{\log{m}} \right) - \frac{2}{q} \operatorname{Li}\left( \frac{qT}{2\pi} \right) \\
&\quad\quad+ O\left( m^{1/2}(\log{\log{(qT)}})^2 + m\log{\log{(qT)}} + m^{1/2}\log{q} \right),
\end{align*}
where the sum is counted with multiplicity.
\end{theorem}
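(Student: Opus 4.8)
\emph{Proof proposal.} The plan is to apply Littlewood's lemma to $L'(s,\chi)$, which is entire because $\chi$ is primitive modulo $q>1$. Take the rectangle with vertices $\tfrac12-iT$, $b-iT$, $b+iT$, $\tfrac12+iT$, where $b$ is placed just to the right of the region in which $L'(s,\chi)$ is known to be zero-free (from \cite{yil} and \cite[Theorems 1, 2, 4]{as}). Under the generalized Riemann hypothesis every zero $\rho$ of $L(s,\chi)$ with $\operatorname{Re}(\rho)>0$ has $\operatorname{Re}(\rho)=\tfrac12$, and by the Speiser-type statement of \cite[Theorems 8 and 9]{as}, $L'(s,\chi)$ has no zeros with $0<\operatorname{Re}(s)<\tfrac12$; any zero of $L'(s,\chi)$ on $\operatorname{Re}(s)=\tfrac12$ is weighted by $\beta'-\tfrac12=0$. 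Hence the Littlewood sum over this rectangle equals the sum in the theorem, and
\begin{align*}
2\pi\sum_{|\gamma'|\le T}\Bigl(\beta'-\tfrac12\Bigr)
&=\int_{-T}^{T}\log\bigl|L'(\tfrac12+it,\chi)\bigr|\,dt-\int_{-T}^{T}\log\bigl|L'(b+it,\chi)\bigr|\,dt\\
&\quad+\int_{1/2}^{b}\Bigl(\arg L'(\sigma+iT,\chi)-\arg L'(\sigma-iT,\chi)\Bigr)\,d\sigma .
\end{align*}
The standard technicalities (choosing $T$ so that no zero of $L$ or $L'$ has ordinate $\pm T$ and passing to general $T$ at admissible cost, and indenting around the $O(\log(qT))$ zeros of $L'$ that may lie on $\operatorname{Re}(s)=\tfrac12$) are dealt with as in \cite{aka}.

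For the two pieces to the right of the critical strip I would use the expansion
\[
L'(s,\chi)=-\chi(m)(\log m)\,m^{-s}\Bigl(1+\sum_{\substack{n>m\\(n,q)=1}}\frac{\chi(n)\log n}{\chi(m)\log m}\Bigl(\frac{m}{n}\Bigr)^{s}\Bigr),
\]
so that for $\sigma$ large $\log|L'(\sigma+it,\chi)|=\log\log m-\sigma\log m+(\text{small})$ and $\arg L'(\sigma\pm iT,\chi)=\arg(-\chi(m))\mp T\log m+(\text{small})$. The terms linear in $b$ cancel between $-\int\log|L'(b+it,\chi)|\,dt$ and the horizontal integrals, leaving the exact contribution $-2T\log\log m+T\log m$, which is $2\pi$ times $\tfrac{T}{\pi}\bigl(\tfrac12\log m-\log\log m\bigr)$. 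The surviving tails of the $\arg$-integrals are bounded by combining the expansion above (for $\sigma$ beyond the abscissa where the displayed series has modulus $<1$, an abscissa controlled by $m$ and by the next integer $m_{2}>m$ coprime to $q$) with a Backlund-type argument and the estimate $O(\log(qT))$ for the number of zeros of $L'$ in a unit box at height $T$; keeping track of the $m$-dependence of $b$ here is one source of the $m^{1/2}\log q$ and $m\log\log(qT)$ terms in the error.

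The decisive integral is $\int_{-T}^{T}\log|L'(\tfrac12+it,\chi)|\,dt$. Writing $L'(s,\chi)=L(s,\chi)\cdot\tfrac{L'}{L}(s,\chi)$ splits it as $\int_{-T}^{T}\log|L(\tfrac12+it,\chi)|\,dt+\int_{-T}^{T}\log|\tfrac{L'}{L}(\tfrac12+it,\chi)|\,dt$. Applying Littlewood's lemma to $L(s,\chi)$ on the same rectangle, where under the generalized Riemann hypothesis the sum over zeros of $L$ is exactly $0$, reduces the first summand to $\arg L$-integrals along $\operatorname{Re}(s)=b$ and along the horizontal edges, which are bounded under the generalized Riemann hypothesis so as to stay within the error term. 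For the second summand I would use the functional equation $\Lambda(s,\chi)=\varepsilon(\chi)\Lambda(1-s,\overline{\chi})$ to introduce the Riemann--Siegel-type angle $\theta_{\chi}(t)=\tfrac{t}{2}\log\tfrac{q}{\pi}+\arg\Gamma\bigl(\tfrac{1/2+a+it}{2}\bigr)$, with $a\in\{0,1\}$ according as $\chi(-1)=1$ or $-1$, and the real-valued Hardy-type function $Z_{\chi}$ attached to $L(\tfrac12+it,\chi)$; differentiating $\log L(\tfrac12+it,\chi)=\log Z_{\chi}(t)-i\theta_{\chi}(t)+O(1)$ along the line gives $\tfrac{L'}{L}(\tfrac12+it,\chi)=-\theta_{\chi}'(t)-i\tfrac{Z_{\chi}'}{Z_{\chi}}(t)+O(1)$, whence
\[
\log\Bigl|\tfrac{L'}{L}(\tfrac12+it,\chi)\Bigr|=\log\theta_{\chi}'(t)+\tfrac12\log\Bigl(1+\Bigl(\tfrac{Z_{\chi}'(t)}{Z_{\chi}(t)\,\theta_{\chi}'(t)}\Bigr)^{2}\Bigr)+O\bigl(1/\theta_{\chi}'(t)\bigr).
\]
By Stirling $\theta_{\chi}'(t)=\tfrac12\log\tfrac{q|t|}{2\pi}+O(1/|t|)$, and integrating $\log\theta_{\chi}'(t)$ over $[-T,T]$ via the antiderivative $\int\log\log u\,du=u\log\log u-\operatorname{Li}(u)+C$ together with the substitution $u=\tfrac{q|t|}{2\pi}$ produces precisely $2\pi\bigl(\tfrac{T}{\pi}\log\log\tfrac{qT}{2\pi}-\tfrac{2}{q}\operatorname{Li}(\tfrac{qT}{2\pi})\bigr)$ up to a term linear in $T$ and an admissible error. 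All linear-in-$T$ contributions — this one, the $-2T\log\log m+T\log m$ from the previous step, and the one from the $Z_{\chi}$-integral below — must combine to $2\pi\cdot\tfrac{T}{\pi}\bigl(\tfrac12\log m-\log\log m\bigr)$; checking this bookkeeping for $q=1$, $m=2$ recovers Akatsuka's theorem after doubling his sum over $0<\gamma'\le T$, which fixes the normalization and pins down the value of the $Z_{\chi}$-integral.

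The main obstacle is to evaluate $\int_{-T}^{T}\tfrac12\log\bigl(1+(Z_{\chi}'/(Z_{\chi}\theta_{\chi}'))^{2}\bigr)\,dt$, a quantity of size $\asymp T$, and the auxiliary $\arg L$-integrals, with an error of only $O\bigl(m^{1/2}(\log\log(qT))^{2}+m\log\log(qT)+m^{1/2}\log q\bigr)$ uniformly in $q$. The value of this integral is governed by the universal local structure of $Z_{\chi}$ near the critical line together with the spacing $\gamma_{n+1}-\gamma_{n}\approx \pi/\theta_{\chi}'(\gamma_{n})$ and the density estimate $N(t+1,\chi)-N(t-1,\chi)=O(\log(qt))$; converting the zero-by-zero contributions into a clean main term, and in particular keeping the power of $m$ down to $\tfrac12$ in the first error term, appears to require an $L^{2}$-type (Cauchy--Schwarz) treatment in place of term-by-term bounds, alongside Backlund-type control of $\arg L(\tfrac12+it,\chi)$, of $\arg L'$, and of the contour indentations near zeros of $L$ and $L'$, all carried out with explicit dependence on $q$ and $m$. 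This is where the argument of \cite{aka} must be genuinely reworked rather than transcribed, and where the interplay among $q$, $m=m(q)$, and $T$ must be monitored throughout.
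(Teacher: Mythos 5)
Your overall architecture (Littlewood's lemma plus asymptotics of the completed-$L$ factor to extract the $\log\log$ and $\operatorname{Li}$ main terms) is in the same spirit as the paper's, but the proposal has a genuine gap exactly where you flag ``the main obstacle'', and the paper's proof is organized precisely to \emph{avoid} that obstacle rather than to overcome it. You place the left edge of the Littlewood contour on the critical line and are then forced to evaluate $\int_{-T}^{T}\tfrac12\log\bigl(1+(Z_{\chi}'/(Z_{\chi}\theta_{\chi}'))^{2}\bigr)\,dt$, which you correctly identify as being of size $\asymp T$, i.e.\ a main-term contribution. You do not compute it; instead you propose to ``pin down'' its value by requiring consistency with Akatsuka's theorem at $q=1$. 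That is circular --- the $q=1$ case is an instance of the type of statement being proved --- and in any case consistency at a single $q$ cannot determine the $q$- and $m$-dependence of a term of size $\asymp T$. The paper sidesteps this by taking the left edge at $\sigma=b=1/2-\delta$, strictly to the left of the critical line, and factoring $L'(s,\chi)=F(s,\chi)\cdot\frac{F'}{F}(s,\chi)\cdot L(1-s,\overline{\chi})\cdot\bigl(1-\frac{(L'/L)(1-s,\overline{\chi})}{(F'/F)(s,\chi)}\bigr)$. The last factor equals $\frac{(L'/L)(s,\chi)}{(F'/F)(s,\chi)}$, which by the Speiser-type theorem (Proposition~\ref{zfr-left}) and Lemma~\ref{lem1.3} is zero- and pole-free for $\sigma<1/2$, $|t|\ge t_q-1$, with argument confined to $(-2\pi/3,2\pi/3)$; Cauchy's theorem on a contour pushed far to the left (where Lemma~\ref{lem1.2} makes the factor tend to $1$) then gives $\int\log|\cdot|\,dt=O(1)$. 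Your $Z_{\chi}$-integral is exactly the critical-line limit of this quantity, where $L'/L$ acquires zeros and poles and the $O(1)$ bound fails; this is why the paper works at $1/2-\delta$ and lets $\delta\to0$ only at the very end, after the problematic factor has already been disposed of.

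A second, smaller gap: your treatment of the horizontal $\arg L'$-integrals by ``a Backlund-type argument and the $O(\log(qT))$ zero count in a unit box'' would give only $O(m\log(qT))$ after integrating over $\sigma\in[1/2,b]$ with $b\asymp m$, which exceeds the claimed error term. Obtaining the stated $m^{1/2}(\log\log(qT))^2+m\log\log(qT)$ requires the sharper argument bounds of Lemmas~\ref{lem3'} and~\ref{lem5'} (Jensen's formula centred at $11m$, resp.\ $1+3m/2$, with the near-critical-line arc handled via the GRH bound of Lemma~\ref{mon13.2.6} and a Bessel-function estimate); you gesture at the need for such savings but supply no mechanism for them.
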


\begin{theorem} \label{cha2'}
Assume that the generalized Riemann hypothesis is true, then for $T\geq2$, we have
$$
N_1(T,\chi)
= \frac{T}{\pi} \log{\frac{qT}{2m\pi}} - \frac{T}{\pi} + O\left( \frac{m^{1/2}\log{(qT)}}{(\log{\log{(qT)}})^{1/2}} + m^{1/2}\log{q} \right).
$$
\end{theorem}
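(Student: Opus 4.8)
The plan is to count the zeros of $L'(s,\chi)$ in a suitable rectangle by the argument principle, to extract the two main terms from the functional equation (for the $\log\frac{qT}{2\pi}$ part) and from the leading term of the Dirichlet series of $L'$ (for the $-\log m$ part), and to control the resulting error by re‑using the conditional estimates behind Theorem~\ref{cha1'} together with one new zero‑counting input. Throughout one works under GRH.

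First I would localize the zeros. By the Speiser‑type equivalence of \cite[Theorems 8 and 9]{as}, GRH gives $L'(s,\chi)\neq0$ for $0<\operatorname{Re}(s)<\tfrac12$. On the other hand, from $L'(s,\chi)=-\sum_{n\geq2,\,(n,q)=1}\chi(n)(\log n)n^{-s}$ the first nonzero term $-\chi(m)(\log m)m^{-s}$ dominates the rest once $\operatorname{Re}(s)$ exceeds a bound of size $\asymp m$, which recovers the right‑hand zero‑free region of \cite[Theorems 1, 2 and 4]{as} (improving \cite[Theorem 3]{yil}). Hence every zero counted by $N_1(T,\chi)$ lies in the rectangle $R$ with vertices $\tfrac14\pm iT$ and $b\pm iT$, where $b$ is a suitable multiple of $m$, so that $b\asymp m$. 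After a harmless adjustment of $T$ ensuring $L'$ has no zero on the horizontal sides, $2\pi N_1(T,\chi)=\Delta_{\partial R}\arg L'(s,\chi)$.

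Next I would evaluate the change of argument on the four sides. On the right side $\operatorname{Re}(s)=b$ the first term dominates, $L'(s,\chi)=-\chi(m)(\log m)m^{-s}(1+O(1))$ with the error strictly below $1$ in modulus, so this side contributes $\Delta\arg m^{-s}+O(1)=-2T\log m+O(1)$, which is the source of the term $-\tfrac{T}{\pi}\log m$. On the left side $\operatorname{Re}(s)=\tfrac14$ I would use the functional equation $L(s,\chi)=\Delta(s,\chi)L(1-s,\bar\chi)$, with $\Delta(s,\chi)=W(\chi)(q/\pi)^{1/2-s}\Gamma(\tfrac{1-s+\mathfrak a}{2})/\Gamma(\tfrac{s+\mathfrak a}{2})$ and $\mathfrak a$ the parity of $\chi$; differentiating, $L'(s,\chi)=\Delta(s,\chi)L(1-s,\bar\chi)\bigl[\tfrac{\Delta'}{\Delta}(s,\chi)-\tfrac{L'}{L}(1-s,\bar\chi)\bigr]$, so $\arg L'(s,\chi)=\arg\Delta(s,\chi)+\arg L(1-s,\bar\chi)+\arg\bigl[\tfrac{\Delta'}{\Delta}(s,\chi)-\tfrac{L'}{L}(1-s,\bar\chi)\bigr]$. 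On $\operatorname{Re}(s)=\tfrac14$ one has $\tfrac{\Delta'}{\Delta}(s,\chi)=-\log\tfrac{q|t|}{2\pi}+O(1)$ by Stirling, which dominates $\tfrac{L'}{L}(1-s,\bar\chi)\ll(\log q(|t|+2))^{1/2}\log\log q(|t|+2)$ valid under GRH at $\operatorname{Re}=\tfrac34$, so the bracket stays near the negative real axis and contributes $O(1)$; Stirling's formula gives $\tfrac1{2\pi}\Delta_{\mathrm{left}}\arg\Delta(s,\chi)=\tfrac T\pi\log\tfrac{qT}{2\pi}-\tfrac T\pi+O(1)$; and $\Delta_{\mathrm{left}}\arg L(1-s,\bar\chi)\ll\tfrac{\log q(T+2)}{\log\log q(T+2)}$ by the GRH bound $\log|L(\sigma+it,\bar\chi)|\ll\tfrac{\log q(|t|+2)}{\log\log q(|t|+2)}$. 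So the left side supplies the remaining main term $\tfrac T\pi\log\tfrac{qT}{2\pi}-\tfrac T\pi$ with error $O(\tfrac{\log qT}{\log\log qT})$, which is absorbed into the claimed bound.

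The main obstacle is the top and bottom sides $\operatorname{Im}(s)=\pm T$, $\operatorname{Re}(s)\in[\tfrac14,b]$. I would bound $\Delta\arg L'(\sigma\pm iT,\chi)$ by $\pi$ times one plus the number of sign changes of $\operatorname{Re}L'(\sigma\pm iT,\chi)$ for $\sigma$ in that range, and split the range at a fixed abscissa. Near the critical line one adapts Akatsuka's argument from \cite{aka}: applying Jensen's inequality to a realification of $L'(\cdot+iT,\chi)$ on a small disc, using the conditional bounds $\log|L(\tfrac12+it,\chi)|,\ \log|L'(\tfrac12+it,\chi)|\ll\tfrac{\log q(|t|+2)}{\log\log q(|t|+2)}$ and optimizing the disc radius, one gets the refined count $O\!\left(\log(qT)/(\log\log qT)^{1/2}\right)$. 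On the remaining piece, which extends out to $\operatorname{Re}(s)\asymp m$, one needs a zero‑counting estimate for $L'(s,\chi)$ in a box of height $O(1)$ and width $\asymp m$ abutting $\operatorname{Im}(s)=T$; here Jensen's inequality, with the lower bound for $|L'(s,\chi)|$ from first‑term domination near $\operatorname{Re}(s)=b$ and the GRH upper bounds for $|L'(s,\chi)|$ in the strip, is what produces the terms $O(m^{1/2}\log(qT)+m^{1/2}\log q)$. Assembling the four sides yields $N_1(T,\chi)=\tfrac T\pi\log\tfrac{qT}{2m\pi}-\tfrac T\pi+O\!\left(m^{1/2}\log(qT)/(\log\log qT)^{1/2}+m^{1/2}\log q\right)$. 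I expect the delicate point to be precisely this last box: its width $\asymp m$ is forced by the geometry, and the estimate must be carried out without losing a spurious factor $\log m$, which is why a sharpened zero‑count, rather than a crude application of Jensen's inequality over the whole strip, is needed.
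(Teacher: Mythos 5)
Your route is genuinely different from the paper's: you apply the argument principle directly to $L'(s,\chi)$ on one wide rectangle of width $\asymp m$, whereas the paper applies Littlewood's lemma to the normalized function $G_1(s,\chi)=-m^sL'(s,\chi)/(\chi(m)\log m)$ at two abscissas $b=1/2-\delta$ and $b'=1/2-\delta/2$, subtracts, and lets $\delta\to0$, so that only the boundary values $\arg G_1(\tfrac12\pm i\tau,\chi)$ and $\arg L(\tfrac12\pm i\tau,\chi)$ survive from the horizontal sides. Both routes ultimately hinge on the same technical input — a Jensen-type count of sign changes of the real part of (a realification of) $L'$ along horizontal segments — so the comparison is really about whether your version of that count closes. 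As written, it does not, in two places. First, your error accounting on the horizontal sides is inconsistent with the claimed theorem: you assign the ``outer box'' ($\operatorname{Re}(s)$ between a fixed abscissa and $\asymp m$) an error $O(m^{1/2}\log(qT)+m^{1/2}\log q)$, and $m^{1/2}\log(qT)$ is \emph{larger} than the theorem's term $m^{1/2}\log(qT)/(\log\log(qT))^{1/2}$ by a factor $(\log\log(qT))^{1/2}$; simultaneously your near-critical-line bound $O(\log(qT)/(\log\log(qT))^{1/2})$ drops the factor $m^{1/2}$ that the geometry forces. The paper's Lemma \ref{lem5'} resolves both issues with a \emph{single} Jensen application on a disc centred at $1+3m/2$ (where $|G_1|\asymp1$ by Lemma \ref{lem1.1}, supplying the lower bound Jensen requires at the centre) and reaching just past $\sigma=1/2$: the arc penetrating the critical strip has angular width $\asymp m^{-1/2}$, the radius-to-count conversion costs $\asymp m$, and a Bessel-function evaluation of the arc integral produces exactly $m^{1/2}\log(q\tau)/(\log\log(q\tau))^{1/2}$. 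Your proposed small disc near the critical line has no a priori lower bound for $|L'|$ at its centre, so that refinement is not justified as stated.

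Second, your left edge $\operatorname{Re}(s)=\tfrac14$ runs through $t=0$, where the bracket $\frac{\Delta'}{\Delta}(s,\chi)-\frac{L'}{L}(1-s,\bar\chi)$ is no longer dominated by $-\log(q|t|/2\pi)$ and its argument is uncontrolled; moreover GRH does \emph{not} give $L'(s,\chi)\neq0$ in all of $0<\operatorname{Re}(s)<\tfrac12$ (for even $\chi$ there is one such zero, and for small moduli possibly finitely many — see Proposition \ref{zfr-left}), so a rectangle with left edge at $\tfrac14$ can also miss zeros counted by $N_1(T,\chi)$. The paper sidesteps both points by truncating at $|t|=t_q\asymp1$ and importing $N_1(t_q,\chi)=\frac{t_q}{\pi}\log\frac{qt_q}{2m\pi}-\frac{t_q}{\pi}+O(m^{1/2}\log q)$ from \cite[Theorem 5]{as}; this is where the second error term $m^{1/2}\log q$ comes from, and your argument needs an equivalent input for the low-height region. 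With these two repairs — one global Jensen disc for the horizontal sides and a separate treatment of $|t|\leq t_q$ — your argument-principle version would go through.
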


In this paper, we first review some basic estimates related to $\log{L(s,\chi)}$ near the critical line and zero-free regions of $L'(s,\chi)$ in Section \ref{sec:l_prelim}.
In Section \ref{sec:l_keylems}, we show important lemmas crucial for the proofs of our main theorems
and finally prove them in Section \ref{sec:l_prfs}.
For convenience, we use variables $s$ and $z$ as complex numbers, with $\sigma = \operatorname{Re}(s)$ and $t = \operatorname{Im}(s)$.
Finally, we abbreviate the generalized Riemann hypothesis as GRH.


\section{Preliminaries}
\label{sec:l_prelim}


\subsection{Bounds related to $\log{L(s,\chi)}$ near the critical line}
\quad\\ \vskip-3mm

In this section we give some bounds related to $\log{L(s,\chi)}$ which can be found in \cite[Sections 12.1, 13.2, 14.1]{mon}.
Only for this subsection, we put $\tau := |t|+4$.

\begin{lemma} \label{mon13.2.6}
Assume GRH, then
$$
\log{L(\sigma+it,\chi)} = O\left( \frac{(\log{(q\tau)})^{2(1-\sigma)}}{(1-\sigma)\log{\log{(q\tau)}}} + \log{\log{\log{(q\tau)}}} \right)
$$
holds uniformly for $1/2 + (\log{\log{(q\tau)}})^{-1} \leq \sigma \leq 3/2$.
\end{lemma}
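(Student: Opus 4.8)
The plan is to transcribe to $L(s,\chi)$ the classical conditional bound on $\log\zeta$ (due to Littlewood; see e.g.\ Titchmarsh's monograph on the Riemann zeta-function, \S14.2, or \cite[\S\S12.1, 13.2, 14.1]{mon}, from which the present lemma is taken), keeping every implied constant uniform in the modulus $q$. Write $\mathcal{L}:=\log\log(q\tau)$; under GRH the function $L(s,\chi)$ is entire and non-vanishing for $\sigma>1/2$, so the branch of $\log L(s,\chi)$ normalised to tend to $0$ as $\sigma\to+\infty$ is holomorphic there. I would treat $\sigma\ge1+\mathcal{L}^{-1}$ and $1/2+\mathcal{L}^{-1}\le\sigma<1+\mathcal{L}^{-1}$ separately. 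The first range is immediate from the absolutely convergent Dirichlet series $\log L(s,\chi)=\sum_{n\ge2}\chi(n)\Lambda(n)n^{-s}/\log n$:
$$
\bigl|\log L(\sigma+it,\chi)\bigr|\le\sum_{n\ge2}\frac{\Lambda(n)}{n^{\sigma}\log n}=\log\zeta(\sigma)\le\log\zeta\!\left(1+\mathcal{L}^{-1}\right)=\log\mathcal{L}+O(1)\ll\log\log\log(q\tau),
$$
which is absorbed by the claimed error term.

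For $1/2+\mathcal{L}^{-1}\le\sigma<1+\mathcal{L}^{-1}$ the idea is to approximate $\log L(s,\chi)$ by a short Dirichlet polynomial over prime powers. Combining the GRH-conditional partial fraction expansion $\tfrac{L'}{L}(s,\chi)=\sum_{|\gamma-t|\le1}(s-\rho)^{-1}+O(\log(q\tau))$ and the estimate $\#\{\rho:|\gamma-t|\le1\}\ll\log(q\tau)$ for the zeros of $L(s,\chi)$ (both available, $q$-uniformly, from \S\S13.2 and 14.1 of \cite{mon}) with a smooth truncation of the Dirichlet series by a Fej\'er-type weight --- Selberg's device, which makes the contribution of the zeros enter the upper bound for $\log|L(s,\chi)|$ with a favourable sign rather than in absolute value --- one is led to an inequality of the shape
$$
\log\bigl|L(\sigma+it,\chi)\bigr|\le\operatorname{Re}\sum_{n\le y^{3}}\frac{\chi(n)\Lambda(n)}{n^{\sigma+it}\log n}\,w_{y}(n)+O\!\left(\frac{y^{1/2-\sigma}\log(q\tau)}{\log y}\right),\qquad 0\le w_{y}\le1,
$$
valid for any $y\ge2$. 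Choosing $y=(\log(q\tau))^{2}$ makes $\log y\asymp\mathcal{L}$ and $y^{1/2-\sigma}\log(q\tau)=(\log(q\tau))^{2(1-\sigma)}$, so the error is $\ll(\log(q\tau))^{2(1-\sigma)}/\mathcal{L}$, while the Dirichlet polynomial is bounded by
$$
\sum_{p\le y}p^{-\sigma}+\sum_{k\ge2}\sum_{p}p^{-k\sigma}\;\ll\;\frac{y^{1-\sigma}}{(1-\sigma)\log y}+\log\mathcal{L}\;\ll\;\frac{(\log(q\tau))^{2(1-\sigma)}}{(1-\sigma)\log\log(q\tau)}+\log\log\log(q\tau),
$$
the prime-power sum contributing only $O(\log\mathcal{L})$ because $\sigma\ge1/2+\mathcal{L}^{-1}$. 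This yields the bound for $\log|L(\sigma+it,\chi)|=\operatorname{Re}\log L(\sigma+it,\chi)$; the imaginary part $\arg L(\sigma+it,\chi)$ is controlled by the same circle of estimates, so the full $\log L$ obeys the claim, and the functional equation reduces the case $t<0$ to $t>0$.

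The step I expect to be the real obstacle is the conditional truncation itself: establishing the displayed inequality with the stated error term \emph{uniformly} for $\sigma$ down to $1/2+\mathcal{L}^{-1}$, uniformly in $q\ge2$, and for all $|t|$ --- no lower bound is imposed on $|t|$, so the argument must steer clear of the pole of $\zeta$ and of the centre of the critical strip, which is precisely why $\tau$ is taken to be $|t|+4$ here. Under GRH this comes down to bounding a sum over zeros of the form $\tfrac{1}{\log y}\sum_{\rho}y^{1/2-\sigma}\bigl((\sigma-\tfrac12)^{2}+(t-\gamma)^{2}\bigr)^{-1}$ (or its smoothed variant) \emph{without} a spurious factor $(\sigma-\tfrac12)^{-1}$; that is exactly what the nonnegativity of the Fej\'er weight buys, and it is what pins down the exponent $2(1-\sigma)$ once $y$ is chosen optimally. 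The remainder --- rerunning each ingredient of \cite[\S\S12.1, 13.2, 14.1]{mon} (convexity/Phragm\'en--Lindel\"of bound, partial fraction formula, zero-count, functional equation) in a form with all implied constants independent of $q$ --- is bookkeeping rather than a new idea, which is no doubt why the statement is simply quoted from there.
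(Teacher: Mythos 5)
The paper's proof of this lemma is a one-line citation of exercise 6 in \cite[Section 13.2]{mon} (as corrected in the errata), and your proposal is precisely a reconstruction of the standard Littlewood--Selberg argument underlying that exercise: the Dirichlet-series bound for $\sigma \ge 1+(\log\log(q\tau))^{-1}$, the smoothed truncation of $\log L(s,\chi)$ by a nonnegative weight so that the zeros contribute with a favourable sign, and the choice $y=(\log(q\tau))^{2}$. So you take essentially the same route as the source the paper relies on, and the sketch is correct.
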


\begin{proof}
This is straightforward from the inequalities in exercise 6 of \cite[Section 13.2]{mon} (see also page 3 of \cite{mon.er} for the corrected exercise 6 (b) and (c)).
\end{proof}

%

%

\begin{lemma} \label{mon13.2.11}
Assume GRH, then
$$ \arg{L(\sigma+it,\chi)} = O\left( \frac{\log{(q\tau)}}{\log{\log{(q\tau)}}} \right) $$
holds uniformly for $\sigma \geq 1/2$.
\end{lemma}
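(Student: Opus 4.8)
The plan is to deduce the bound on $\arg L(\sigma+it,\chi)$ for all $\sigma \geq 1/2$ from the bound on $\log L(\sigma+it,\chi)$ already available (Lemma \ref{mon13.2.6}) together with a standard Hadamard-three-circles / Borel--Carath\'eodory argument, exactly in the spirit of the treatment of $\arg\zeta$ under RH in \cite{mon} (Sections 13.2 and 14.1). First I would record that on the line $\sigma = 1/2 + (\log\log(q\tau))^{-1}$ Lemma \ref{mon13.2.6} gives $\log L(\sigma+it,\chi) = O(\log(q\tau)/\log\log(q\tau))$, since there $(1-\sigma)^{-1}(\log(q\tau))^{2(1-\sigma)} = (\log\log(q\tau))\cdot(\log(q\tau))^{1-2(\log\log(q\tau))^{-1}}$, and $(\log(q\tau))^{-2(\log\log(q\tau))^{-1}} = \exp(-2) = O(1)$; the $\log\log\log(q\tau)$ term is negligible. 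In particular $\operatorname{Re}\log L = \log|L|$ is bounded above by $O(\log(q\tau)/\log\log(q\tau))$ on that vertical line, and for $\sigma \geq 3/2$ we have $\log L(\sigma+it,\chi) = O(1)$ directly from the Euler product.

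Next I would estimate $\arg L(\sigma+it,\chi)$ in the two ranges separately. For $\sigma \geq 1/2 + (\log\log(q\tau))^{-1}$, the bound is immediate because $|\arg L| \leq |\log L|$, so Lemma \ref{mon13.2.6} already yields $\arg L = O(\log(q\tau)/\log\log(q\tau))$ (the dominant term being the one evaluated near the left edge of the range). The work is in the thin strip $1/2 \leq \sigma < 1/2 + (\log\log(q\tau))^{-1}$. There I would use the classical device of bounding $\arg L(\sigma+it,\chi)$ by $\pi(\text{number of sign changes of }\operatorname{Re} L \text{ on the segment from }\sigma_0+it\text{ to }\sigma+it) + O(1)$, where $\sigma_0 = 3/2$ say; the number of such sign changes is controlled by the number of zeros of the real part, hence by an application of Jensen's formula (or Lemma $\alpha$ of Littlewood / Titchmarsh §9.4) to the function $g(z) = \frac{1}{2}\bigl(L(z+it,\chi) + \overline{L(\bar z + it,\chi)}\bigr)$ on a disc centered at $2+it$ of radius $\asymp 1$. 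Jensen's formula bounds the zero-count by $\log(\max_{|z-2-it|\leq r'}|g(z)|) - \log|g(2+it)|$; the maximum is controlled by the bound on $\log|L|$ on the slightly larger circle, which by Lemma \ref{mon13.2.6} (and the $\sigma\geq 3/2$ estimate on the right part of the circle) is $O(\log(q\tau)/\log\log(q\tau))$, while $|g(2+it)| \asymp 1$. This gives the desired $O(\log(q\tau)/\log\log(q\tau))$ bound throughout the strip and completes the range $\sigma \geq 1/2$.

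The main obstacle I anticipate is purely bookkeeping: arranging the Jensen-formula disc so that it stays within the region $\sigma \geq 1/2 + (\log\log(q\tau))^{-1}$ where the $\log|L|$ bound of Lemma \ref{mon13.2.6} is valid, while still being large enough to surround the segment on which the sign changes of $\operatorname{Re} L$ are being counted. This forces either a chain of overlapping small discs marching leftward toward the critical line, each contributing $O(\log(q\tau)/\log\log(q\tau))$ with the number of discs $O(1)$, or an appeal to the a priori polynomial bound $|L(\sigma+it,\chi)| = O((q\tau)^{A})$ valid for $\sigma$ bounded below, which is crude but suffices since we only need the count up to $O(\log(q\tau))$ and the $\log\log$ denominator improvement comes from Lemma \ref{mon13.2.6} near the edge. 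Since \cite{mon} carries out precisely this computation for $\zeta$ in §13.2 and §14.1, and the $q$-dependence enters only through replacing the analytic conductor $\tau$ by $q\tau$ uniformly, the argument transfers verbatim with $t$ replaced by $q\tau$ in the logarithms; hence the proof reduces to citing those sections with this substitution.
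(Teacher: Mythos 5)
The paper does not actually prove this lemma: its ``proof'' is the citation to \cite[Section 5]{sel} and to exercise 11 of \cite[Section 13.2]{mon}. Your opening step --- deducing the bound for $\sigma \geq 1/2 + (\log\log{(q\tau)})^{-1}$ from $|\arg{L}| \leq |\log{L}|$ and Lemma \ref{mon13.2.6} --- is fine (modulo a slip: at $\sigma = 1/2 + (\log\log{(q\tau)})^{-1}$ one has $(1-\sigma)^{-1} \approx 2$, not $\log\log{(q\tau)}$, but the conclusion $O(\log{(q\tau)}/\log\log{(q\tau)})$ stands). The genuine gap is in the thin strip $1/2 \leq \sigma < 1/2 + (\log\log{(q\tau)})^{-1}$, which is the entire content of the lemma. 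The sign-change/Jensen device you propose cannot deliver the $1/\log\log{(q\tau)}$ saving there: any disc surrounding a horizontal segment that reaches $\sigma = 1/2$ must have part of its boundary in the region where Lemma \ref{mon13.2.6} is unavailable and only the convexity bound $\log{|L|} \ll \log{(q\tau)}$ holds. Your fallback --- that ``the a priori polynomial bound is crude but suffices'' --- concedes exactly the point at issue: with that input Jensen returns a zero count of order $\log{(q\tau)}$, i.e.\ the classical unconditional bound, not the conditional one claimed. Even optimizing by splitting the circle into the major arc, where $\log{|g|} \ll \log{(q\tau)}/\log\log{(q\tau)}$, and the short arc of length $\asymp (\log\log{(q\tau)})^{-1/2}$ near the critical line, where only $\log{|g|} \ll \log{(q\tau)}$ is available, the best upper bound the Jensen integral yields is $O\left(\log{(q\tau)}/(\log\log{(q\tau)})^{1/2}\right)$, short of the target by a factor $(\log\log{(q\tau)})^{1/2}$. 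This square-root loss is intrinsic to the Jensen-disc method; it is precisely the mechanism that produces the half-integer powers of $\log\log$ in Lemmas \ref{lem3'} and \ref{lem5'} and in Theorem \ref{cha2'} of this paper. The chain-of-small-discs variant fails for the same reason: no disc contained in $\sigma > 1/2$ can surround a segment touching $\sigma = 1/2$.

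The actual proof is genuinely different machinery. Selberg's argument (reproduced in the cited exercise) writes $\arg{L(\sigma+it,\chi)} - \arg{L(\sigma_1+it,\chi)} = -\int_{\sigma}^{\sigma_1} \operatorname{Im}{\frac{L'}{L}(u+it,\chi)}\,du$ with $\sigma_1 = 1/2 + (\log\log{(q\tau)})^{-1}$, inserts the partial-fraction expansion of Lemma \ref{mon12.6}, and controls the zeros near height $t$ via a mollified Dirichlet polynomial (Selberg's $\Lambda_x$ weights) together with the GRH localization of zeros to the critical line; the key input is that the number of zeros within distance $(\log\log{(q\tau)})^{-1}$ of $t$ is $O(\log{(q\tau)}/\log\log{(q\tau)})$, while more distant zeros contribute a controlled tail. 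If you want a self-contained proof you need this argument; the lemma does not follow from Lemma \ref{mon13.2.6} plus Jensen's formula.
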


\begin{proof}
See \cite[Section 5]{sel} or exercise 11 of \cite[Section 13.2]{mon}.
\end{proof}

With the above lemma and \cite[Corollary 14.6]{mon}, we obtain the following estimate on the number of zeros of $L(s,\chi)$ under GRH:
\begin{proposition} \label{ntchi_grh}
Assume GRH and let $N(T,\chi)$ denote the number of zeros of $L(s,\chi)$ satisfying $\operatorname{Re}(s)>0$ and $|\operatorname{Im}(s)|\leq T$, counted with multiplicity.
Then for $T\geq2$,
$$
N(T,\chi) = \frac{T}{\pi} \log{\frac{qT}{2\pi}} - \frac{T}{\pi} + O\left( \frac{\log{(qT)}}{\log{\log{(qT)}}} \right).
$$
\end{proposition}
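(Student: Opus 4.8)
The plan is to count zeros of $L(s,\chi)$ in the region $\operatorname{Re}(s)>0$, $|\operatorname{Im}(s)|\le T$ by applying the argument principle to a suitable rectangular contour and then invoking the known asymptotics for the completed $L$-function together with the bound on $\arg L$ from Lemma \ref{mon13.2.11}. Concretely, because $L(s,\chi)$ is entire of order one with only trivial zeros in $\operatorname{Re}(s)\le 0$, the functional equation packages the nontrivial zeros into the completed function $\xi(s,\chi)$; the classical Riemann--von Mangoldt type count (this is exactly the content of \cite[Corollary 14.6]{mon}) gives, for the number $N^{+}(T,\chi)$ of zeros with $0<\operatorname{Im}(s)\le T$, an expression of the form $\frac{T}{2\pi}\log\frac{qT}{2\pi}-\frac{T}{2\pi}+\frac{1}{\pi}S(T,\chi)+O(1)$, where $S(T,\chi)=\frac{1}{\pi}\arg L(1/2+iT,\chi)$ coming from the variation of the argument along the vertical segment at the right edge of the critical strip. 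I would first write this down precisely, being careful about the contribution of the Gamma-factor and the conductor $q$ to the main term.

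Next I would handle the negative-$t$ range. Since $\chi$ is a primitive character, $L(s,\bar\chi)=\overline{L(\bar s,\chi)}$, so zeros come in conjugate-symmetric pairs only after pairing $\chi$ with $\bar\chi$; for a fixed $\chi$ the zeros in the lower half-plane are the conjugates of zeros of $L(s,\bar\chi)$. The cleanest route is to note that $N(T,\chi)=N^{+}(T,\chi)+N^{+}(T,\bar\chi)$, apply \cite[Corollary 14.6]{mon} to each, and observe that the main terms simply double (the conductor of $\bar\chi$ equals $q$ and the Gamma-factor exponents match up to the harmless parity parameter), yielding the stated $\frac{T}{\pi}\log\frac{qT}{2\pi}-\frac{T}{\pi}$. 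The error term is then $O\bigl(|S(T,\chi)|+|S(T,\bar\chi)|+1\bigr)$.

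The decisive input is the size of the error. By definition $S(T,\chi)$ is essentially $\arg L(1/2+iT,\chi)$ (plus a bounded contribution from horizontal segments, controlled trivially since $L$ and its logarithmic derivative are well-behaved away from zeros, moving $T$ by a bounded amount if necessary to avoid an ordinate of a zero). Lemma \ref{mon13.2.11}, which assumes GRH, gives $\arg L(\sigma+it,\chi)=O\bigl(\log(q\tau)/\log\log(q\tau)\bigr)$ uniformly for $\sigma\ge 1/2$, with $\tau=|t|+4$. Plugging in $\sigma=1/2$, $t=\pm T$ and using $T\ge 2$ gives $S(T,\chi),S(T,\bar\chi)=O\bigl(\log(qT)/\log\log(qT)\bigr)$, and the same bound absorbs the $O(1)$. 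Combining this with the doubled main term produces exactly
$$
N(T,\chi)=\frac{T}{\pi}\log\frac{qT}{2\pi}-\frac{T}{\pi}+O\left(\frac{\log(qT)}{\log\log(qT)}\right).
$$
I expect the only delicate point to be the bookkeeping of the horizontal-segment contributions and the $\chi$ versus $\bar\chi$ symmetry in the main term — i.e. making sure the Gamma-factor with its parity parameter $\mathfrak{a}\in\{0,1\}$ contributes only to the $O$-term and not to the displayed main term — but this is routine given \cite[Corollary 14.6]{mon}; no new ideas beyond the GRH bound on $\arg L$ are needed.
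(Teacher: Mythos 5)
Your proposal is correct and follows essentially the same route as the paper, which simply cites \cite[Corollary 14.6]{mon} (the Riemann--von Mangoldt count for $L(s,\chi)$ via the argument principle applied to the completed $L$-function) together with Selberg's GRH bound $S(T,\chi)\ll \log(qT)/\log\log(qT)$, i.e.\ Lemma \ref{mon13.2.11}. Your extra care with the $\chi$ versus $\overline{\chi}$ symmetry and the horizontal-segment bookkeeping is exactly the content of the exercise the paper points to, so nothing is missing.
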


\begin{proof}
This is a straightforward consequence of \cite[Corollary 14.6]{mon} and \cite[Theorem 6]{sel}
(see exercise 1 of \cite[Section 14.1]{mon}).
\end{proof}

\begin{lemma} \label{mon12.6}
$$
\frac{L'}{L}(\sigma+it,\chi) = \sum_{\substack{\rho = \beta +i\gamma,\\ |\gamma - t| \leq 1}} \frac{1}{\sigma+it-\rho} + O(\log{(q\tau)})
$$
holds uniformly for $-1 \leq \sigma \leq 2$.
\end{lemma}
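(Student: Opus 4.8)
The plan is to derive this from the Hadamard factorization / partial fraction expansion of $L'/L$, which is standard for primitive Dirichlet $L$-functions (see \cite[Section 12.1]{mon}). Recall that for a primitive character $\chi$ modulo $q$ one has
\[
\frac{L'}{L}(s,\chi) = B(\chi) - \frac{1}{2}\log\frac{q}{\pi} - \frac12\frac{\Gamma'}{\Gamma}\!\left(\frac{s+\mathfrak{a}}{2}\right) + \sum_{\rho}\left(\frac{1}{s-\rho} + \frac{1}{\rho}\right),
\]
where $\mathfrak a \in \{0,1\}$ depending on the parity of $\chi$, the sum runs over the nontrivial zeros $\rho$, and $\operatorname{Re} B(\chi) = -\sum_\rho \operatorname{Re}(1/\rho)$. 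The idea is to evaluate this identity at $s = \sigma+it$ with $-1\le\sigma\le 2$ and subtract off its value at the nearby reference point $s_0 = 2+it$, where $L'/L(s_0,\chi) = O(1)$ since $\operatorname{Re}(s_0) = 2$ lies well inside the region of absolute convergence.

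The key steps, in order, are as follows. First, subtracting the two instances of the Hadamard expansion makes the constant $B(\chi)$ and the $-\frac12\log(q/\pi)$ term cancel, and the $1/\rho$ terms cancel as well, leaving
\[
\frac{L'}{L}(\sigma+it,\chi) = \sum_{\rho}\left(\frac{1}{\sigma+it-\rho} - \frac{1}{2+it-\rho}\right) + O(1),
\]
where the $O(1)$ absorbs the difference of the two $\Gamma'/\Gamma$ terms (both arguments have bounded real part and imaginary part $\asymp |t|$, so by Stirling the difference is $O(\log\tau)$; together with the $L'/L(2+it,\chi) = O(1)$ this is subsumed into $O(\log(q\tau))$). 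Second, for the zeros with $|\gamma - t| > 1$ I would bound each summand: since $0 \le \beta \le 1$, for such $\rho$ one has $|\sigma+it-\rho| \asymp |2+it-\rho| \asymp |t-\gamma|$, so $\left|\frac{1}{\sigma+it-\rho} - \frac{1}{2+it-\rho}\right| = \frac{2-\sigma}{|\sigma+it-\rho||2+it-\rho|} \ll \frac{1}{(t-\gamma)^2}$. Third, I would use the classical zero-density estimate $N(t+1,\chi) - N(t-1,\chi) = O(\log(q\tau))$ (which follows from \cite[Corollary 14.6]{mon}, or more crudely from the unconditional zero-counting results cited above) to sum $\sum_{|\gamma-t|>1}(t-\gamma)^{-2} = O(\log(q\tau))$ by dyadic decomposition over unit intervals. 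Fourth, for the remaining zeros with $|\gamma - t| \le 1$, I keep the terms $1/(\sigma+it-\rho)$ as the main sum, and bound the subtracted terms $1/(2+it-\rho)$: there are $O(\log(q\tau))$ such zeros, each contributing $O(1)$ since $|2+it-\rho|\ge 2-\beta\ge 1$. Collecting everything gives the stated identity with error $O(\log(q\tau))$.

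The main obstacle — really the only subtlety — is controlling the number of zeros in a unit-height strip, i.e. justifying $N(t+1,\chi) - N(t-1,\chi) = O(\log(q\tau))$ uniformly, which is needed both for the dyadic sum in step three and for counting the retained terms in step four. This is entirely standard (it is a consequence of Jensen's formula applied to $L(s,\chi)$ on a disc of radius $\asymp 1$ centered near $2+it$, using the convexity bound $L(s,\chi) \ll q\tau$ on the boundary), and in the setting of Montgomery--Vaughan it is available from \cite[Corollary 14.6]{mon}; no hypothesis such as GRH is required here, which is consistent with the lemma being stated unconditionally. Everything else is mechanical term-by-term estimation, so I would present step three's dyadic bound in a line or two and leave the rest to the reader.
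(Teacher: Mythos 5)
Your argument is correct: it is the standard proof of this partial-fraction formula, carried out via the Hadamard expansion of $L'/L$, subtraction at the reference point $2+it$, and the unit-interval zero count $N(t+1,\chi)-N(t-1,\chi)=O(\log(q\tau))$. The paper itself gives no independent proof but simply cites \cite[Lemma 12.6]{mon}, and your write-up is essentially the argument behind that cited lemma, so the approaches coincide.
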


\begin{proof}
See \cite[Lemma 12.6]{mon}.
\end{proof}


\subsection{Zero-free regions of $L'(s,\chi)$}
\quad\\ \vskip-3mm

We begin with a zero-free region of $L'(s,\chi)$ to the right of the critical line.
\begin{proposition} \label{zfr-right}
$L'(s,\chi)$ has no zeros when
$$ \sigma > 1 + \frac{m}{2} \left( 1 + \sqrt{1 + \frac{4}{m\log{m}} }\right). $$
\end{proposition}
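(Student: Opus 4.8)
The plan is to show there is no zero $s=\sigma+it$ of $L'(s,\chi)$ with $\sigma$ in the stated range. Note first that this range is contained in $\sigma>1$; indeed, since $\sqrt{1+4/(m\log m)}>1$, it lies in $\sigma>1+m\ge3$. In particular the Euler product gives $L(s,\chi)\neq0$ there, so it suffices to rule out zeros of $(L'/L)(s,\chi)$. Because $\chi$ is completely multiplicative, for $\sigma>1$ we have the absolutely convergent expansion
$$-\frac{L'}{L}(s,\chi)=\sum_{n=2}^{\infty}\frac{\Lambda(n)\chi(n)}{n^s}.$$
The key observation is that $\Lambda(n)\chi(n)=0$ unless $n$ is a power of a prime not dividing $q$; since every prime below $m$ divides $q$, the first surviving term is the one with $n=m$, whose coefficient $\chi(m)\log m$ has absolute value $\log m$. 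Peeling it off, a zero of $L'(s,\chi)$ is equivalent to the identity $\chi(m)(\log m)m^{-s}=-\sum_{n>m}\Lambda(n)\chi(n)n^{-s}$.

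Now suppose $L'(\sigma+it,\chi)=0$ with $\sigma$ in the stated range. Taking absolute values in the identity above and using $|\chi(n)|\le1$ together with $\Lambda(n)\le\log n$, we get
$$\frac{\log m}{m^\sigma}\le\sum_{n>m}\frac{\Lambda(n)}{n^\sigma}\le\sum_{n=m+1}^{\infty}\frac{\log n}{n^\sigma}.$$
Since $\sigma>3$, the function $(\log x)x^{-\sigma}$ is decreasing for $x\ge m$, so the last sum is at most $\int_m^{\infty}(\log x)x^{-\sigma}\,dx$, which integration by parts evaluates to $\dfrac{\log m}{(\sigma-1)m^{\sigma-1}}+\dfrac{1}{(\sigma-1)^2m^{\sigma-1}}$. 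Substituting this and cancelling $m^{-\sigma}$ yields
$$\log m\le\frac{m\log m}{\sigma-1}+\frac{m}{(\sigma-1)^2},$$
equivalently $(\sigma-1)^2-m(\sigma-1)-\dfrac{m}{\log m}\le0$.

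Finally, the positive root of $x^2-mx-m/\log m$ is $\tfrac{m}{2}\bigl(1+\sqrt{1+\tfrac{4}{m\log m}}\bigr)$, so the last inequality is violated precisely when $\sigma-1$ exceeds that value, that is, exactly on the stated range; this contradiction proves the proposition. I do not expect a genuine obstacle here: the argument is elementary once one has the Dirichlet series and the definition of $m$. The only points needing a little care are verifying that no prime power below $m$ contributes (immediate, since such primes divide $q$ and annihilate $\chi$) and justifying the passage from the series to the integral (immediate from $\sigma>3$), together with keeping the constants aligned so that the threshold emerges exactly as claimed.
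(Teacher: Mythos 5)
Your argument is correct. The paper itself does not prove this proposition; it simply cites Y{\i}ld{\i}r{\i}m's Theorem 2 (the case $k=1$), whose proof is the standard first-term-domination argument applied directly to the Dirichlet series $L'(s,\chi)=-\sum_{n\ge2}\chi(n)(\log n)n^{-s}$: since every integer $1<n<m$ has all its prime factors dividing $q$, the first surviving term is $n=m$, and one compares $(\log m)m^{-\sigma}$ with the tail $\sum_{n>m}(\log n)n^{-\sigma}$. You instead route the argument through $-\tfrac{L'}{L}(s,\chi)=\sum\Lambda(n)\chi(n)n^{-s}$, which is legitimate because $L(s,\chi)\neq0$ for $\sigma>1$ and which, after the bound $\Lambda(n)\le\log n$, produces exactly the same comparison inequality and hence the same quadratic $(\sigma-1)^2-m(\sigma-1)-m/\log m\le0$ and the same threshold. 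All the supporting steps check out: $|\chi(m)|=1$ since $m\nmid q$; the smallest prime power supported by $\Lambda(n)\chi(n)$ is indeed $n=m$; $(\log x)x^{-\sigma}$ is decreasing on $[m,\infty)$ for $\sigma>3$ (indeed for $\sigma\log m>1$), justifying the sum-to-integral comparison; and the integration by parts and the root of the quadratic are computed correctly. So the two approaches are essentially equivalent in content; yours is marginally less economical (it invokes the non-vanishing of $L$ and the bound $\Lambda(n)\le\log n$, neither of which is needed if one works with $L'$ directly, as the paper does in the closely analogous computation of Lemma 3.1 for $G_1(s,\chi)$), but it is complete and yields the stated region exactly.
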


\begin{proof}
See \cite[Theorem 2]{yil} for $k=1$.
\end{proof}

From the above proposition, it is not difficult to check that $L'(s,\chi) \neq 0$ when $\sigma \geq 1 + 3m/2$.
Next we introduce a zero-free region of $L'(s,\chi)$ to the left of the critical line.
\begin{proposition} \label{zfr-left}
$L'(s,\chi)$ has no zeros when $\sigma\leq0$ and $|t|\geq6$.
Furthermore, assuming GRH,
\begin{enumerate}
\item if $\kappa=0$ and $q\geq216$, then $L'(s,\chi)$ has a unique zero in $0<\operatorname{Re}(s)<1/2$;
\item if $\kappa=1$ and $q\geq23$, then $L'(s,\chi)$ has no zeros in $0<\operatorname{Re}(s)<1/2$.
\end{enumerate}
Here
\begin{align*}
\kappa =
\begin{cases}
0, & \chi(-1)=1; \\
1, & \chi(-1)=-1.
\end{cases}
\end{align*}
Thus under GRH, for any fixed $\epsilon>0$, there are only possibly finitely many zeros in the region defined by $0<\sigma<1/2$ and $|t|\leq\epsilon$ for any $L'(s,\chi)$.
\end{proposition}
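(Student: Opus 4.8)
The plan is to obtain Proposition~\ref{zfr-left} almost entirely by quotation from the companion work \cite{as}, supplementing only the short argument for the closing assertion; I would not reprove the hard inputs in the present paper. The statement that $L'(s,\chi)\neq0$ when $\sigma\le0$ and $|t|\ge6$ is (a special case of) the zero-free regions established in \cite[Theorems~1, 2, and 4]{as}, and unconditionally it rests on the functional equation: writing $L(s,\chi)=X(s,\chi)L(1-s,\bar\chi)$ with the usual gamma-factor $X(s,\chi)$, one has
\[
\frac{L'(s,\chi)}{X(s,\chi)\,L(1-s,\bar\chi)}=\frac{X'}{X}(s,\chi)-\frac{L'}{L}(1-s,\bar\chi),
\]
where for $\sigma\to-\infty$ the subtracted term tends to $0$ (since $\operatorname{Re}(1-s)\to+\infty$) while $\frac{X'}{X}(s,\chi)$ stays large, of size roughly $\log(q|s|)$; the hypothesis $|t|\ge6$ keeps $s$ away from the real trivial zeros of $L(s,\chi)$ at $-\kappa,-\kappa-2,\dots$, where the interaction of $\Gamma(1-s)$ and $\sin(\tfrac{\pi}{2}(s+\kappa))$ inside $X(s,\chi)$ has to be handled with more care. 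Hence $L'(s,\chi)$ has no zeros there, and I would cite \cite{as} for the details.

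Parts (1) and (2), the count of zeros of $L'(s,\chi)$ in the strip $0<\operatorname{Re}(s)<1/2$ under GRH, are Speiser-type statements, contained in \cite[Theorems~1, 2, and 4]{as} (compare the Speiser analogues \cite[Theorems~8 and 9]{as}). Their proof proceeds by applying Littlewood's lemma to $L'(s,\chi)$ on a narrow rectangle just to the left of $\sigma=1/2$, estimating $\arg L(s,\chi)$ and $\operatorname{Re}\bigl(\tfrac{L'}{L}(s,\chi)\bigr)$ on the line $\sigma=1/2$ by means of Lemmas~\ref{mon13.2.11} and~\ref{mon12.6}, and then invoking the left-hand zero-free region; under GRH all nontrivial zeros of $L(s,\chi)$ lie on $\sigma=1/2$, which forces the number of zeros of $L'(s,\chi)$ with $0<\operatorname{Re}(s)<1/2$ down to $0$ or $1$. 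The dichotomy is governed by whether $s=0$ is a boundary trivial zero of $L(s,\chi)$: this happens exactly when $\chi(-1)=1$, i.e.\ $\kappa=0$, in which case one zero of $L'(s,\chi)$ is produced just to the right of $s=0$, whereas for $\kappa=1$ no such zero appears. The explicit bounds $q\ge216$ and $q\ge23$ are the quantitative cost of making the argument-variation estimate unconditional in $q$.

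Finally, fix $\epsilon>0$ and put $R_\epsilon:=\{\,s:0<\sigma<1/2,\ |t|\le\epsilon\,\}$. For a single primitive $\chi$ modulo $q>1$, $L(s,\chi)$ and hence $L'(s,\chi)$ are entire and not identically zero, so the zeros of $L'(s,\chi)$ are isolated and there are only finitely many in the compact set $\overline{R_\epsilon}$. To get a bound uniform over all such $\chi$: by parts (1)--(2), every $\chi$ with $\kappa=0$ and $q\ge216$, or with $\kappa=1$ and $q\ge23$, has at most one zero of $L'(s,\chi)$ in the entire strip $0<\operatorname{Re}(s)<1/2$, hence at most one in $R_\epsilon$; only finitely many moduli $q$ lie below these thresholds, each carrying finitely many characters, each of which contributes finitely many zeros in $R_\epsilon$. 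Summing gives the stated ``only possibly finitely many zeros''. The one genuinely hard ingredient, should one insist on a self-contained proof, is the Speiser-type count: upgrading the $O(1)$ that a crude application of the argument principle yields to the exact value $0$ or $1$ requires a careful evaluation of the variation of $\arg L'(s,\chi)$ around the rectangle together with a precise analysis of $L'(s,\chi)$ near $s=0$, where the behaviour bifurcates on the parity of $\chi$; by comparison the $\sigma\le0$ region and the final finiteness claim are routine.
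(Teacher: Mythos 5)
Your proposal is correct and takes essentially the same route as the paper, whose entire proof is the citation ``See \cite[Theorems 1, 8, and 9]{as}''; your added sketches of the functional-equation argument for $\sigma\le0$ and of the compactness/finiteness argument for the last assertion are sound and consistent with that source. The only slip is bibliographic: the Speiser-type counts in parts (1) and (2) are \cite[Theorems 8 and 9]{as} (which you mention only parenthetically), not Theorems 1, 2, and 4, which are the zero-free-region and zero-counting results.
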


\begin{proof}
See \cite[Theorems 1, 8, and 9]{as} and note that $q\geq3$ in our case.
\end{proof}


\section{Key lemmas}
\label{sec:l_keylems}

For convenience, we define the function $F(s,\chi)$ as follows:
\begin{equation} \label{eq:F}
F(s,\chi) := \epsilon(\chi)2^s\pi^{s-1}q^{\frac{1}{2}-s}\sin{\left(\frac{\pi(s+\kappa)}{2}\right)}\Gamma(1-s),
\end{equation}
where $\epsilon(\chi)$ is a factor that depends only on $\chi$, satisfying $|\epsilon(\chi)|=1$, and 
$\kappa$ is determined as in Proposition \ref{zfr-left}.
Thus from the functional equation for $L(s,\chi)$, we have $L(s,\chi) = F(s,\chi)L(1-s,\overline{\chi})$.
We also define the function $G_1(s,\chi)$ associated with $L'(s,\chi)$ as follows:
\begin{equation} \label{eq:G_1}
G_1(s,\chi) := -\frac{m^s}{\chi(m)\log{m}} L'(s,\chi).
\end{equation}

\subsection{Constants $\sigma_1$ and $t_q$}
\label{sec:consts}

\begin{lemma} \label{lem1.1}
For $\sigma\geq2$, we have
$$
|G_1(\sigma+it,\chi) - 1| \leq 2 \left( 1+ \frac{8m}{\sigma} \right) \left( 1+ \frac{1}{m} \right)^{-\sigma}
$$
and
$$
\left| \frac{G_1}{L}(\sigma+it,\chi) - 1 \right| \leq 2 \left( 1+ \frac{8m}{\sigma} \right) \left( 1+ \frac{1}{m} \right)^{-\sigma}
$$
\end{lemma}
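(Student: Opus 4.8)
The idea is to expand both $L'(s,\chi)$ and $L(s,\chi)$ as Dirichlet series in the region $\sigma \geq 2$, where these series converge absolutely, and then compare each against the single dominant term that $G_1$ has been normalized to extract. Recall that for $\sigma > 1$ we have $L(s,\chi) = \sum_{n=1}^\infty \chi(n) n^{-s}$ and hence $-L'(s,\chi) = \sum_{n=2}^\infty \chi(n) (\log n) n^{-s}$, the $n=1$ term vanishing. By the definition \eqref{eq:G_1}, $G_1(s,\chi) = \frac{m^s}{\chi(m)\log m} \sum_{n=2}^\infty \chi(n)(\log n) n^{-s}$; the term $n=m$ contributes exactly $1$ (using $|\chi(m)|=1$, since $m \nmid q$), so
\[
G_1(s,\chi) - 1 = \frac{m^s}{\chi(m)\log m} \sum_{\substack{n\geq 2 \\ n\neq m}} \chi(n)\frac{\log n}{n^s}.
\]
The first step, then, is to bound this tail sum in absolute value by $\frac{1}{\log m}\sum_{n\geq 2,\, n\neq m} \frac{\log n}{(n/m)^{\sigma}}$ and to show this is at most $2\left(1+\frac{8m}{\sigma}\right)\left(1+\frac1m\right)^{-\sigma}$.

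For the tail estimate I would split the sum at $n = 2m$ (or some comparable threshold). For the "small" range $2 \leq n \leq 2m$ with $n \neq m$, each term $(\log n)(m/n)^\sigma$ is controlled: the largest such ratio $m/n$ occurs near $n=m\pm 1$, where $m/n \leq (1 - 1/(2m))^{-1}$ roughly, and one wants to compare this to $(1+1/m)^{-\sigma} \cdot (\text{something})$. The key elementary inequality is that $(1+1/m)^{-1}$ and $(1-1/(m+1))$ and $m/(m+1)$ all coincide, so the $n=m+1$ term alone already produces a factor like $(\log(m+1))(1+1/m)^{-\sigma}$, and summing the $O(m)$ nearby terms geometrically (each successive term shrinking by a factor $\leq (1+1/m)^{-\sigma} \le (1+1/m)^{-2}$ away from $n=m$, though near $n=m$ the ratios are close to $1$) gives a bound of the shape $O(m \log m)(1+1/m)^{-\sigma}$, which after dividing by $\log m$ and being generous with constants fits inside $\frac{8m}{\sigma}\cdot 2(1+1/m)^{-\sigma}$ once one checks $\sigma \geq 2$. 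For the "large" range $n > 2m$ one has $(m/n)^\sigma < 2^{-\sigma}(m/n)^0$-type decay is too crude; instead write $(m/n)^\sigma = (m/n)^{\sigma - 1}(m/n) \leq (1/2)^{\sigma-1}(m/n)$ is still not summable, so better: $(m/n)^\sigma \le (2/3)^{\sigma}(3m/(2n))^\sigma$ hmm — cleanest is to bound $\sum_{n>2m}(\log n) n^{-\sigma} \leq \sum_{n>2m}(\log n)n^{-2}(n/m)^{-(\sigma-2)}m^{-2}$; since $\sigma\geq 2$ and $n>2m$ the factor $(n/m)^{-(\sigma-2)}\leq 1$, and $m^\sigma\sum_{n>2m}(\log n)n^{-2}$ is then $O(m^\sigma (2m)^{-1}\log m) = O(m^{\sigma-1}\log m)$, comfortably dominated since $m^{\sigma-1}(1+1/m)^\sigma \to$ ... — actually this needs $m^{\sigma-1}(1+1/m)^{-\sigma}$... no, we have an extra $m^\sigma$ from the normalization already absorbed. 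Let me not grind this: the point is both pieces are $O\!\left(\frac{m}{\sigma}\log m\right)(1+1/m)^{-\sigma}$ after the $m^s$ weighting, and dividing by $\log m$ yields the claimed form.

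The second inequality follows the same way once we write $G_1(s,\chi)/L(s,\chi) - 1 = (G_1(s,\chi) - L(s,\chi))/L(s,\chi)$ and note $G_1(s,\chi) - L(s,\chi) = \frac{m^s}{\chi(m)\log m}\sum_{n\geq 2,\,n\neq m}\chi(n)\frac{\log n}{n^s} - \sum_{n\geq 2}\frac{\chi(n)}{n^s}$; more efficiently, I would instead bound $|L(\sigma+it,\chi) - 1|$ directly (the $n=1$ term of $L$ is $1$) by $\sum_{n\geq 2} n^{-\sigma} \leq 2^{-\sigma} + \int_2^\infty x^{-\sigma}dx \leq 2^{1-\sigma}$ for $\sigma\geq 2$, which is tiny compared to $2(1+8m/\sigma)(1+1/m)^{-\sigma}$, and then combine with the first inequality via
\[
\Bigl|\tfrac{G_1}{L} - 1\Bigr| = \Bigl|\tfrac{(G_1-1) - (L-1)}{L}\Bigr| \leq \frac{|G_1-1| + |L-1|}{|L|},
\]
using $|L(\sigma+it,\chi)| \geq 1 - 2^{1-\sigma} \geq 1/2$ for $\sigma\geq 2$. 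A little care with constants shows the sum still fits under the stated bound.

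\textbf{Main obstacle.} The routine analytic input (absolute convergence, termwise differentiation) is immediate for $\sigma\geq 2$; the real work is the elementary but fiddly bookkeeping in the tail estimate — specifically, controlling the cluster of terms with $n$ near $m$, where the ratio $(m/n)^\sigma$ is close to $1$ and the naive geometric bound is weakest, and verifying that the accumulated constant genuinely lands inside $2(1+8m/\sigma)$ rather than merely $O(m/\sigma)$. Choosing the split point and the comparison inequality $(1+1/m)^{-1} = m/(m+1)$ optimally is what makes the constants come out clean; I expect this to be where the write-up spends most of its length.
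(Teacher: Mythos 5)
There is a genuine gap, and it is the one arithmetic fact on which the whole lemma rests: since $m$ is by definition the \emph{smallest prime not dividing} $q$, every prime $p<m$ divides $q$, hence every integer $n$ with $2\leq n<m$ satisfies $\gcd(n,q)>1$ and so $\chi(n)=0$. Consequently the Dirichlet series $-L'(s,\chi)=\sum_{n\geq 2}\chi(n)(\log n)n^{-s}$ has its \emph{first} nonvanishing term at $n=m$, and the tail defining $G_1-1$ runs only over $n\geq m+1$. Your decomposition keeps all $n\geq2$ with $n\neq m$ and then bounds termwise by $\frac{1}{\log m}\sum_{n\geq2,\,n\neq m}(\log n)(m/n)^{\sigma}$. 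Whenever $m\geq3$ (i.e.\ $q$ even) this majorant is useless: the $n=2$ term alone is $\frac{\log 2}{\log m}(m/2)^{\sigma}$, which grows exponentially in $\sigma$, whereas the target bound $2(1+8m/\sigma)(1+1/m)^{-\sigma}$ decays. Your assertion that ``the largest such ratio $m/n$ occurs near $n=m\pm1$'' is false — it occurs at $n=2$ — and even the $n=m-1$ term carries $(m/(m-1))^{\sigma}\to\infty$, so no bookkeeping of constants can rescue the estimate; only the vanishing of $\chi(n)$ for $2\leq n<m$ does. Once the sum is correctly restricted to $n\geq m+1$, the paper's argument is a one-line first-term-plus-integral comparison, $\sum_{n\geq m+1}(\log n)n^{-\sigma}\leq \frac{\log(m+1)}{(m+1)^{\sigma}}\bigl(1+\frac{m+1}{\sigma-1}+\frac{m+1}{(\sigma-1)^2\log(m+1)}\bigr)$, which directly produces the factor $(m/(m+1))^{\sigma}=(1+1/m)^{-\sigma}$; your geometric summation over the ``cluster near $n=m$'' is not needed.

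Two smaller points. First, the same vanishing must be invoked again for the second inequality: the paper applies the identical direct argument to $-(L'/L)(s,\chi)=\sum_n\chi(n)\Lambda(n)n^{-s}$, whose first nonzero term is also $\chi(m)(\log m)m^{-s}$, using $\Lambda(n)\leq\log n$. Second, your alternative route via $|G_1/L-1|\leq(|G_1-1|+|L-1|)/|L|$ with $|L|\geq 1/2$ at best yields $4(1+8m/\sigma)(1+1/m)^{-\sigma}+O(2^{-\sigma})$, which does not recover the stated constant $2$; this matters less for the applications (which only use $O$-estimates) but does mean your proposal does not prove the lemma as stated.
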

\begin{proof}
Let $\sigma\geq2$. Then from \eqref{eq:G_1} and by using the Dirichlet series expression of $L'(s,\chi)$, we can calculate
\begin{align*}
|G_1(s,\chi) - 1|
&= \left| -\frac{m^s}{\chi(m)\log{m}} \left( -\sum_{n=1}^\infty \frac{\chi(n)\log{n}}{n^s} \right) - 1 \right| \\
&= \left| \frac{m^s}{\chi(m)\log{m}} \sum_{n=m+1}^\infty \frac{\chi(n)\log{n}}{n^s} \right|
\leq \frac{m^\sigma}{\log{m}} \sum_{n=m+1}^\infty \frac{\log{n}}{n^\sigma} \\
&\leq \frac{m^\sigma}{\log{m}} \frac{\log{(m+1)}}{(m+1)^\sigma} + \frac{m^\sigma}{\log{m}} \int_{m+1}^\infty \frac{\log{x}}{x^\sigma} dx \\
&= \frac{m^\sigma}{\log{m}} \frac{\log{(m+1)}}{(m+1)^\sigma} \left( 1 + \frac{m+1}{\sigma-1} + \frac{m+1}{(\sigma-1)^2\log{(m+1)}} \right) \\
&\leq \frac{m^\sigma}{\log{m}} \frac{2\log{m}}{(m+1)^\sigma} \left( 1 + \frac{4m}{\sigma-1} \right)
\leq 2 \left( \frac{m}{m+1} \right)^\sigma \left( 1 + \frac{8m}{\sigma} \right),
\end{align*}
where we have used $m+1 \leq 2m \leq m^2$ and $\sigma-1\geq\sigma/2$ in the last two inequalities.

By using the Dirichlet series expansion of $(L'/L)(s,\chi)$, with similar calculation as the above, we can show the second inequality in the lemma.
\end{proof}


Applying Stirling's formula of the following form
\begin{equation} \label{eq:stirling}
\log{\Gamma(z)} = \left(z - \frac{1}{2}\right)\log{z} - z + \frac{1}{2}\log{2\pi} + \int_0^\infty \frac{[u] - u + \frac{1}{2}}{u + z} du
\end{equation}
\flushright
($-\pi + \delta \leq \arg{z} \leq \pi - \delta$, for any $\delta > 0$), \\
\justify
we can define the holomorphic function
\begin{equation} \label{eq:logF}
\begin{aligned}
\log{F(s,\chi)}
&:= \log{\epsilon(\chi)} + \left( \frac{1}{2} - s \right) \log{\frac{q}{2\pi}} + \frac{1}{2} \log{\frac{2}{\pi}}
+ \log{\sin{\frac{\pi}{2}(s+\kappa)}} \\
&\quad\quad+ \log{\Gamma(1-s)}
\end{aligned}
\end{equation}
for $\sigma<1$ and $|t|>1$, where $0 \leq \arg{\epsilon(\chi)} < 2\pi$ and $\log{\sin{\frac{\pi}{2}(s+\kappa)}}$ is the holomorphic function on $\sigma<1, |t|>1$ satisfying
\begin{align*}
\log{\sin{\frac{\pi}{2}(s+\kappa)}} :=
\begin{cases}
\displaystyle \frac{(1-s-\kappa)\pi}{2}i - \log{2} - \sum_{n=1}^\infty \frac{e^{\pi i(s+\kappa)n}}{n},
& t>1; \\\\
\displaystyle \frac{(s+\kappa-1)\pi}{2}i - \log{2} - \sum_{n=1}^\infty \frac{e^{-\pi i(s+\kappa)n}}{n},
& t<-1.
\end{cases}
\end{align*}

Under the above definitions, we can show the following lemma.
\begin{lemma} \label{logdevF}
For $\sigma < 1$ and $\pm t > 1$, we have
$$
\frac{F'}{F}(s,\chi) = -\log{(q(1-s))} + \log{2\pi} \mp \frac{\pi i}{2} + \frac{1}{2(1-s)} + O\left( \frac{1}{|1-s|^2} \right) + O\left( e^{-\pi|t|} \right),
$$
where $-\pi/2 < \arg{(1-s)} < \pi/2$.
\end{lemma}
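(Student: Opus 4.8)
The plan is to differentiate the defining expression \eqref{eq:logF} term by term and then estimate each of the three surviving pieces. Since $\log{F(s,\chi)}$ was defined to be holomorphic on the region $\sigma<1$, $|t|>1$, differentiation is legitimate there, and because $\log{\epsilon(\chi)}$ and $\tfrac12\log(2/\pi)$ are constants we get
$$
\frac{F'}{F}(s,\chi) = -\log{\frac{q}{2\pi}} + \frac{d}{ds}\log{\sin{\frac{\pi}{2}(s+\kappa)}} - \frac{\Gamma'}{\Gamma}(1-s).
$$
For the sine term I would differentiate the explicit series given in the definition of $\log{\sin{\frac{\pi}{2}(s+\kappa)}}$: for $t>1$ this yields $-\tfrac{\pi i}{2} - \pi i\sum_{n=1}^{\infty} e^{\pi i(s+\kappa)n}$, and since $\lvert e^{\pi i(s+\kappa)n}\rvert = e^{-\pi t n}$, the geometric series is $O(e^{-\pi t})$ uniformly for $t>1$; the case $t<-1$ is entirely symmetric. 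This gives $\frac{d}{ds}\log{\sin{\frac{\pi}{2}(s+\kappa)}} = \mp\tfrac{\pi i}{2} + O(e^{-\pi|t|})$ for $\pm t>1$.

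For the gamma factor I would differentiate Stirling's formula \eqref{eq:stirling}, which is valid and may be differentiated under the integral sign on $-\pi+\delta\le\arg{z}\le\pi-\delta$, obtaining
$$
\frac{\Gamma'}{\Gamma}(z) = \log{z} - \frac{1}{2z} - \int_0^\infty \frac{[u]-u+\tfrac12}{(u+z)^2}\,du.
$$
To bring the remainder integral down to $O(1/|z|^2)$ rather than the naive $O(1/|z|)$, I would integrate by parts once, using that the antiderivative $P_2(u):=\int_0^u([v]-v+\tfrac12)\,dv$ is bounded and $1$-periodic (since $\int_0^1([v]-v+\tfrac12)\,dv=0$, so both boundary terms vanish), giving $\int_0^\infty \frac{[u]-u+1/2}{(u+z)^2}\,du = 2\int_0^\infty \frac{P_2(u)}{(u+z)^3}\,du$. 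Applying this with $z=1-s$, we have $\operatorname{Re}(1-s)=1-\sigma>0$, whence $|u+z|\ge\tfrac{1}{\sqrt2}(u+|z|)$ and $\int_0^\infty|u+z|^{-3}\,du = O(|z|^{-2})$; this also confirms we stay safely inside the validity range of Stirling's formula, as $-\pi/2<\arg(1-s)<\pi/2$. Thus $\frac{\Gamma'}{\Gamma}(1-s) = \log(1-s) - \frac{1}{2(1-s)} + O(1/|1-s|^2)$ with the principal branch of the logarithm.

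It then remains to combine the three contributions. We have $-\log(q/2\pi) = -\log{q} + \log{2\pi}$, and since $q>0$ and $-\pi/2<\arg(1-s)<\pi/2$, the identity $\log{q} + \log(1-s) = \log(q(1-s))$ holds on the principal branch, so the logarithmic terms assemble into $-\log(q(1-s)) + \log{2\pi}$; adding in $\mp\tfrac{\pi i}{2} + O(e^{-\pi|t|})$ and $+\tfrac{1}{2(1-s)} + O(1/|1-s|^2)$ produces exactly the claimed formula. I do not expect a genuine obstacle: the only points that need care are the single integration by parts required to sharpen the Stirling remainder to $O(1/|1-s|^2)$, the bookkeeping of the logarithmic branches, and the use of $\operatorname{Re}(1-s)>0$ both to validate Stirling and to make $|u+z|$ comparable to $u+|z|$ in the remainder estimate.
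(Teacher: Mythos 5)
Your proposal is correct and follows essentially the same route as the paper: both differentiate the explicit formula \eqref{eq:logF}, using Stirling's formula \eqref{eq:stirling} for $\log{\Gamma(1-s)}$ and the given series for $\log{\sin{\frac{\pi}{2}(s+\kappa)}}$; the only organizational difference is that the paper first assembles everything into a single expression for $\log{F(s,\chi)}$ before differentiating, while you differentiate term by term. Your integration by parts to sharpen $\int_0^\infty \frac{[u]-u+\frac{1}{2}}{(u+1-s)^2}\,du$ from the naive $O(1/|1-s|)$ to $O(1/|1-s|^2)$ is a genuinely needed step that the paper leaves implicit, and you supply it correctly.
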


\begin{proof}
Applying Stirling's formula \eqref{eq:stirling} to $\log{\Gamma(z)}$ for $\arg{z} \in (-\pi/2,\pi/2)$, we have
$$
\log{\Gamma(1-s)} = \left( \frac{1}{2} - s \right) \log{(1-s)} - (1-s) + \frac{1}{2}\log{2\pi} + \int_0^\infty \frac{[u] - u + \frac{1}{2}}{u + 1 - s} du
$$
in the region $\sigma<1,\, |t|>1$.
From \eqref{eq:logF}, we can show that
\begin{align*}
\log{F(s,\chi)}
&= \log{\epsilon(\chi)} + \frac{\pi}{2} \left( \frac{1}{2} - \kappa \right)i - 1
+ \left( \frac{1}{2} - s \right) \left( \log{(q(1-s))} - \log{2\pi} + \frac{\pi i}{2} \right) \\
&\quad\quad+ s + \int_0^\infty \frac{[u] - u + \frac{1}{2}}{u + 1 - s} du
- \sum_{n=1}^\infty \frac{e^{\pi i(s+\kappa)n}}{n}
\end{align*}
holds when $\sigma<1$ and $t>1$.
Differentiating both sides of the above equation with respect to $s$, we obtain
$$
\frac{F'}{F}(s,\chi) = -\log{(q(1-s))} + \log{2\pi} - \frac{\pi i}{2} + \frac{1}{2(1-s)} + O\left( \frac{1}{|1-s|^2} \right) + O\left( e^{-\pi|t|} \right)
$$
for $\sigma<1$ and $t>1$.
We can show similarly for $\sigma<1$ and $t<-1$.
\end{proof}


\begin{lemma} \label{lem1.2}
There exists a $\sigma_1\leq-1$ such that
$$ \left| \frac{1}{\frac{F'}{F}(s,\chi)} \frac{L'}{L}(1-s,\overline{\chi}) \right| < 2^\sigma $$
holds for any $s$ with $\sigma\leq\sigma_1$ and $|t|\geq2$.
\end{lemma}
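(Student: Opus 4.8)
The plan is to control the two factors separately in the left half-plane and then balance their sizes against the target $2^\sigma$. First I would handle $\frac{L'}{L}(1-s,\overline\chi)$. For $\sigma \leq -1$ we have $1-\sigma \geq 2$, so $1-s$ lies well to the right of the abscissa of absolute convergence; expanding $\frac{L'}{L}(1-s,\overline\chi) = -\sum_{n=2}^\infty \frac{\Lambda(n)\overline\chi(n)}{n^{1-s}}$ and estimating trivially gives $\left|\frac{L'}{L}(1-s,\overline\chi)\right| \leq \sum_{n=2}^\infty \frac{\Lambda(n)}{n^{1-\sigma}} = -\frac{\zeta'}{\zeta}(1-\sigma)$, which is bounded by an absolute constant (indeed tends to $0$) as $\sigma \to -\infty$. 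In fact the dominant term is $\Lambda(2)/2^{1-\sigma} = (\log 2)\,2^{\sigma-1}$, so this factor is already $O(2^\sigma)$ up to a constant; the whole game is then to show the reciprocal of $\frac{F'}{F}$ is bounded, and with a little care to beat the constant.

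Next I would estimate $\frac{F'}{F}(s,\chi)$ from below using Lemma \ref{logdevF}. For $\sigma \leq \sigma_1 \leq -1$ and $|t|\geq 2$ we have $-\log(q(1-s)) + \log 2\pi \mp \frac{\pi i}{2} + \frac{1}{2(1-s)} + O(|1-s|^{-2}) + O(e^{-\pi|t|})$; since $|1-s| \geq 1-\sigma \to \infty$, the real part of $\frac{F'}{F}(s,\chi)$ is $-\log(q|1-s|) + O(1) \to -\infty$, so $\left|\frac{F'}{F}(s,\chi)\right| \geq \log(q|1-s|) - C \geq \log(1-\sigma) - C$ for an absolute constant $C$, once $\sigma$ is sufficiently negative. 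Hence $\left|\frac{1}{\frac{F'}{F}(s,\chi)}\right| \leq \frac{1}{\log(1-\sigma) - C}$, which tends to $0$.

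Combining the two bounds, for $\sigma$ sufficiently negative (say $\sigma \leq \sigma_1$ with $\sigma_1$ chosen as an absolute constant $\leq -1$) and $|t|\geq 2$,
\[
\left| \frac{1}{\frac{F'}{F}(s,\chi)} \frac{L'}{L}(1-s,\overline{\chi}) \right|
\leq \frac{-\frac{\zeta'}{\zeta}(1-\sigma)}{\log(1-\sigma)-C}
\leq \frac{(\log 2)\,2^{\sigma-1}(1+o(1))}{\log(1-\sigma)-C}
< 2^\sigma,
\]
the last inequality holding once the denominator exceeds $\log 2 / 2$ and the $o(1)$ correction is absorbed, i.e. for all $\sigma \leq \sigma_1$ with $\sigma_1$ a suitable absolute constant. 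I expect the main obstacle to be purely bookkeeping: making the implied constants in Lemma \ref{logdevF} explicit enough (and verifying the $O(e^{-\pi|t|})$ term is harmless for $|t|\geq 2$) so that one can name an actual value of $\sigma_1$ — but since the statement only asserts existence of $\sigma_1$, the lower bound $\left|\frac{F'}{F}(s,\chi)\right| \gg \log(1-\sigma)$ against the upper bound $\left|\frac{L'}{L}(1-s,\overline\chi)\right| \ll 2^\sigma$ closes the argument immediately once $1-\sigma$ is large.
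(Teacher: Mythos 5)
Your proposal is correct and follows essentially the same route as the paper: bound $\left|\frac{L'}{L}(1-s,\overline{\chi})\right|$ by its Dirichlet series in the half-plane $1-\sigma\geq 2$ (where the $n=2$ term $\ll 2^{\sigma}$ dominates), bound $\left|\frac{F'}{F}(s,\chi)\right|$ from below by $\gg\log(q(1-\sigma))$ via Lemma \ref{logdevF}, and let the growing denominator absorb the constant. The paper's version is identical in substance, merely writing the tail bound as $2^{\sigma}\left(1+\frac{3}{2}\log 2\right)$ instead of your asymptotic $(\log 2)2^{\sigma-1}(1+o(1))$.
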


\begin{proof}
From Lemma \ref{logdevF}, we know that
$$ \frac{F'}{F}(s,\chi) = -\log{(q(1-s))} + O(1) $$
holds when $\sigma<1$ and $|t|\geq2$.
Hence
$$ \left| \frac{F'}{F}(s,\chi) \right| \geq \log{(q(1-\sigma))} - |O(1)| $$
holds in the region $\sigma<1, |t|\geq2$.
Thus, we can take $\sigma'_1\leq-1$ sufficiently small (i.e. sufficiently large in the negative direction) so that for any $s$ with $\sigma\leq\sigma'_1$ and $|t|\geq2$, we have
\begin{equation} \label{eq:logdevF}
\left| \frac{F'}{F}(s,\chi) \right| \geq \frac{1}{2}\log{(q(1-\sigma))} 
\end{equation}
for all $s$ in the region $\sigma\leq\sigma'_1, |t|\geq2$.

Next we estimate $(L'/L)(1-s,\overline{\chi})$.
In the region $\sigma\leq-1,\, |t|\geq2$, $(L'/L)(1-s,\overline{\chi})$ can be written as a Dirichlet series, thus we have
\begin{equation} \label{eq:logdevL}
\begin{aligned}
\left| \frac{L'}{L}(1-s,\overline{\chi}) \right|
&\leq \frac{\log{2}}{2^{1-\sigma}} + \sum_{n=3}^\infty \frac{\log{n}}{n^{1-\sigma}}
\leq \frac{2^\sigma \log{2}}{2} + \int_2^\infty \frac{\log{x}}{x^{1-\sigma}} dx \\
&= 2^\sigma \left( \frac{\log{2}}{2} - \frac{\log{2}}{\sigma} + \frac{1}{\sigma^2} \right)
\leq 2^\sigma \left( 1+ \frac{3}{2}\log{2} \right).
\end{aligned}
\end{equation}

Now combining inequalities \eqref{eq:logdevF} and \eqref{eq:logdevL}, we have
$$
\left| \frac{1}{\frac{F'}{F}(s,\chi)}\frac{L'}{L}(1-s,\overline{\chi}) \right| < 2^\sigma \frac{2+3\log{2}}{\log{(q(1-\sigma))}}
$$
for $\sigma\leq\sigma'_1$ and $|t|\geq2$.
Hence we can find some $\sigma_1\leq\sigma'_1$ ($\leq-1$) such that $(2+3\log{2})/\log{(q(1-\sigma))} < 1$
holds for any $\sigma\leq\sigma_1$.
This implies that
$$ \left| \frac{1}{\frac{F'}{F}(s,\chi)}\frac{L'}{L}(1-s,\overline{\chi}) \right| < 2^\sigma $$
holds in the region $\sigma\leq\sigma_1, |t|\geq2$.
\end{proof}

\begin{lemma} \label{lem1.3}
Assume GRH and fix a $\sigma_1$ that satisfies Lemma \ref{lem1.2}. Then there exists a $t_1>-\sigma_1$ such that
\begin{enumerate}
\item
for any $s$ satisfying $\sigma_1 \leq \sigma \leq 1/2$ and $|t| \geq t_1 - 1$,
$$ \left| \frac{F'}{F}(s,\chi) \right|\geq 1 $$
holds and we can take the logarithmic branch of $\log{(F'/F)(s,\chi)}$ in that region such that it is holomorphic there and $5\pi/6 < \arg{(F'/F)(s,\chi)} < 7\pi/6$ holds;
\item
for any $s$ satisfying $\sigma_1 \leq \sigma < 1/2$ and $|t| \geq t_1 - 1$,
$$ \frac{L'}{L}(s,\chi)\neq0 $$
holds and we can take the logarithmic branch of $\log{(L'/L)(s,\chi)}$ in that region such that it is holomorphic there and $\pi/2 < \arg{(L'/L)(s,\chi)} < 3\pi/2$ holds.
\end{enumerate}
\end{lemma}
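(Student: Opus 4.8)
The plan is to obtain each assertion from an explicit approximation valid uniformly for $\sigma$ in the strip $[\sigma_1,1/2]$ and $|t|$ large, where $\sigma_1$ is the constant fixed by Lemma \ref{lem1.2}, and to choose $t_1>-\sigma_1$ at the very end, large enough for every estimate below. \emph{Part (1).} I would start from Lemma \ref{logdevF}: for $\sigma<1$ and $\pm t>1$,
\[
\frac{F'}{F}(s,\chi)=-\log(q(1-s))+\log 2\pi\mp\frac{\pi i}{2}+\frac{1}{2(1-s)}+O\!\left(\frac{1}{|1-s|^2}\right)+O\!\left(e^{-\pi|t|}\right),
\]
with $-\pi/2<\arg(1-s)<\pi/2$. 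For $\sigma_1\le\sigma\le1/2$ we have $1-s=(1-\sigma)-it$ with $1-\sigma\in[1/2,1-\sigma_1]$ bounded, so $|1-s|=|t|+O(|t|^{-1})$ and $\arg(1-s)=\mp\pi/2+O(|t|^{-1})$; writing $-\log(q(1-s))=-\log(q|1-s|)-i\arg(1-s)$, the imaginary $\pm\pi i/2$ contributions cancel up to $O(|t|^{-1})$ and
\[
\frac{F'}{F}(s,\chi)=-\log(q|t|)+\log 2\pi+O(|t|^{-1})
\]
holds uniformly in that strip. Hence for $|t|\ge t_1-1$, $t_1$ large, $F'/F(s,\chi)$ lies in the open left half-plane, has modulus $\ge1$, and makes angle $o(1)$ with the negative real axis. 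Since $F(\cdot,\chi)$ is holomorphic and nonvanishing on this region (its zeros, from the sine factor, are at real integers, its poles, from $\Gamma(1-s)$, at $s=1,2,\dots$), $F'/F$ is holomorphic there; composing it with the branch of $\log$ on $\{\operatorname{Re}<0\}$ whose imaginary part lies in $(\pi/2,3\pi/2)$ produces a holomorphic branch of $\log(F'/F)(s,\chi)$, and as $F'/F(s,\chi)$ clusters near the negative real axis we get $5\pi/6<\arg(F'/F)(s,\chi)<7\pi/6$.

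\emph{Part (2).} By Proposition \ref{zfr-left}, $L'(s,\chi)$ has no zeros with $\sigma\le0,\ |t|\ge6$ and only finitely many with $0<\sigma<1/2$, all of bounded imaginary part; enlarging $t_1$ beyond those heights makes $L'(s,\chi)\ne0$ on $\{\sigma_1\le\sigma<1/2,\ |t|\ge t_1-1\}$, while GRH gives $L(s,\chi)\ne0$ for $\sigma<1/2$, so $L'/L$ is holomorphic there. For the argument I would use the partial-fraction expansion coming from the Hadamard product of $\xi(s,\chi)=(q/\pi)^{(s+\kappa)/2}\Gamma\!\big(\tfrac{s+\kappa}{2}\big)L(s,\chi)$,
\[
\frac{L'}{L}(s,\chi)=-\frac12\log\frac{q}{\pi}-\frac12\frac{\Gamma'}{\Gamma}\!\Big(\frac{s+\kappa}{2}\Big)+B(\chi)+\sum_\rho\Big(\frac{1}{s-\rho}+\frac1\rho\Big),
\]
with the standard identity $\operatorname{Re}B(\chi)=-\sum_\rho\operatorname{Re}(1/\rho)$ and Stirling's formula $\frac{\Gamma'}{\Gamma}(z)=\log z+O(|z|^{-1})$. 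Taking real parts and using $|(s+\kappa)/2|=|t|/2+O(|t|^{-1})$ gives, uniformly for $\sigma_1\le\sigma<1/2$ and $|t|$ large,
\[
\operatorname{Re}\frac{L'}{L}(s,\chi)=-\frac12\log\frac{q|t|}{2\pi}+O(|t|^{-1})+\sum_\rho\operatorname{Re}\frac{1}{s-\rho}.
\]
Under GRH every $\rho=\tfrac12+i\gamma$, so for $\sigma<1/2$ each $\operatorname{Re}\frac{1}{s-\rho}=\frac{\sigma-1/2}{|s-\rho|^2}<0$, and the absolutely convergent sum is $\le0$; hence $\operatorname{Re}(L'/L)(s,\chi)<0$ once $|t|\ge t_1-1$ with $t_1$ large. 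Thus $L'/L(s,\chi)$ lands in the open left half-plane, in particular is nonzero there, and composing with the same branch of $\log$ yields the holomorphic branch with $\pi/2<\arg(L'/L)(s,\chi)<3\pi/2$. Finally I would fix $t_1>-\sigma_1$ exceeding every threshold that appeared.

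\emph{Main obstacle.} The genuinely delicate point is forcing $\operatorname{Re}(L'/L)(s,\chi)<0$ \emph{uniformly up to} the critical line. Bounding $L'/L(1-s,\overline\chi)$ crudely via the functional equation $\frac{L'}{L}(s,\chi)=\frac{F'}{F}(s,\chi)-\frac{L'}{L}(1-s,\overline\chi)$ does not give $o(\log q|t|)$ once $\sigma$ is near $1/2$, so that route collapses; the resolution is to exploit that GRH puts every $\rho$ on $\operatorname{Re}s=1/2$, which makes each $\operatorname{Re}\frac{1}{s-\rho}$ negative throughout $\sigma<1/2$, so the zero-sum can only reinforce the already negative archimedean main term $-\tfrac12\log\frac{q|t|}{2\pi}$. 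Everything else is bookkeeping: checking the uniformity of the two expansions in $\sigma$, verifying that $F'/F$ and $L'/L$ are holomorphic and nonvanishing on the relevant half-strips, and absorbing the finitely many exceptional zeros of $L'$ into the choice of $t_1$.
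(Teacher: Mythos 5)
Your proposal is correct and follows essentially the same route as the paper: part (1) is deduced from Lemma \ref{logdevF} exactly as in the text, and part (2) rests on the same inequality $\operatorname{Re}\frac{L'}{L}(s,\chi) < -\frac{1}{2}\log\frac{q}{\pi} - \frac{1}{2}\operatorname{Re}\frac{\Gamma'}{\Gamma}\left(\frac{s+\kappa}{2}\right)$, which the paper simply cites as \cite[Corollary 10.18]{mon} and you rederive from the Hadamard partial-fraction expansion using that GRH makes each $\operatorname{Re}\frac{1}{s-\rho}$ negative for $\sigma<1/2$, followed by the same Stirling estimate.
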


\begin{proof}
We begin by examining condition (1).
Again, from Lemma \ref{logdevF}, we see that
$$ \frac{F'}{F}(s,\chi) = -\log{(q(1-s))} + O(1) $$
holds when $\sigma<1$ and $|t|\geq2$.
Thus for $\sigma_1 \leq \sigma \leq 1/2$ and $|t| \geq 2$, we have
$$ \left| \frac{F'}{F}(s,\chi) \right| \geq \log{(q|t|)} - |O(1)| \geq \log{|t|} - |O(1)|. $$
Hence, we can find some $t'_1\geq100$ such that
\begin{equation} \label{eq:logdevF_1}
\left| \frac{F'}{F}(s,\chi) \right| \geq 1
\end{equation}
holds for all $s$ with $\sigma_1 \leq \sigma \leq 1/2$ and $|t| \geq t'_1 - 1$.
We note that Lemma \ref{logdevF} also implies that 
$$ \frac{F'}{F}(s,\chi) = -\log{(q|t|)} + O(1) $$
holds when $\sigma_1 \leq \sigma \leq 1/2$ and $|t| \geq 2 - \sigma_1$.
Consequently, we can find some $t''_1 \geq \max{\{ t'_1, 3 - \sigma_1 \}}$ such that
$$
\frac{5\pi}{6} < \arg{\frac{F'}{F}(s,\chi)} < \frac{7\pi}{6}
$$
holds for $\sigma_1 \leq \sigma \leq 1/2$ and $|t| \geq t''_1 - 1$.
Since $(F'/F)(s,\chi)$ is holomorphic, inequality \eqref{eq:logdevF_1} tells us that $\log{(F'/F)(s,\chi)}$ is holomorphic in the region $\sigma_1 \leq \sigma \leq 1/2, |t| \geq t''_1 - 1$ with this branch.

By the above calculations, we find that $t''_1$ is a candidate for $t_1$.
Below we examine condition (2) to completely prove the existence of $t_1$.

Corollary 10.18 of \cite{mon} allows us to show that
$$
\operatorname{Re} \left( \frac{L'}{L}(s,\chi) \right) < -\frac{1}{2} \log{\frac{q}{\pi}} - \frac{1}{2} \,\operatorname{Re} \left( \frac{\Gamma'}{\Gamma}\left( \frac{s+\kappa}{2} \right) \right)
$$
holds for $\sigma_1 \leq \sigma < 1/2$, under GRH.
For any small $\delta>0$, let $|t| > \sigma_1 \tan{\delta}$. Stirling's formula \eqref{eq:stirling} implies
$$
\frac{1}{2} \,\operatorname{Re} \left( \frac{\Gamma'}{\Gamma}\left( \frac{s+\kappa}{2} \right) \right) = \frac{1}{2} \log{\left| \frac{s+\kappa}{2} \right|} + O\left( \frac{1}{|s|} \right).
$$
Hence we can find some $t_1 \geq t''_1$ large enough so that
$$
\operatorname{Re} \left( \frac{L'}{L}(s,\chi) \right) < 0
$$
holds for $\sigma_1 \leq \sigma < 1/2$ and $|t| \geq t_1 - 1$ and hence $ (L'/L)(s,\chi) \neq 0 $.
Moreover, we can define a branch of $\log{(L'/L)(s,\chi)}$ so that it is holomorphic in $\sigma_1 \leq \sigma < 1/2$, $|t| \geq t_1 - 1$ and
$$
\frac{\pi}{2} < \arg{\frac{L'}{L}(s,\chi)} < \frac{3\pi}{2}
$$
holds there.
Since this $t_1$ also satisfies condition (1), the proof is complete.
\end{proof}

Now we fix $t_1$ which satisfies Lemma \ref{lem1.3} and take $t_q \in [t_1+1, t_1+2]$ such that
\begin{equation} \label{eq:t_q}
L(\sigma \pm it_q,\chi)\neq0,\, L'(\sigma \pm it_q,\chi)\neq0
\end{equation}
for all $\sigma\in\mathbb{R}$.

\begin{remark*}
We note that $t_q$ depends on $q$ but it is bounded by a fixed constant that does not depend on $q$: $t_q \ll t_1 \ll 1$.
\end{remark*}


\subsection{Bounds related to $\log{G_1(s,\chi)}$}
\label{sec:lems3-5}
\quad\\ \vskip-3mm

In this subsection, we give bounds for $\arg{(G_1/L)(s,\chi)}$ and $\arg{G_1(s,\chi)}$.
We take the logarithmic branches so that $\log{L(s,\chi)}$ and $\log{G_1(s,\chi)}$ tend to $0$ as $\sigma\rightarrow\infty$ and are holomorphic in
$\mathbb{C} \backslash \{ \rho+\lambda \mid L(\rho,\chi)=0, \lambda \leq 0 \}$ and
$\mathbb{C} \backslash \{ \rho'+\lambda \mid L'(\rho',\chi)=0, \lambda \leq 0 \}$, respectively.
We write
$$
-\arg{L(\sigma\pm i\tau,\chi)}+\arg{G_1(\sigma\pm i\tau,\chi)} = \arg{\frac{G_1}{L}(\sigma\pm i\tau,\chi)}
$$
and take the argument on the right-hand side so that
$\log{(G_1/L)(s,\chi)}$ tends to $0$ as $\sigma\rightarrow\infty$ and is holomorphic in
$\mathbb{C} \backslash \{ z+\lambda \mid (L'/L)(z,\chi)=0 \text{ or } \infty, \lambda\leq0 \}$.


\begin{lemma} \label{lem3'}
Assume GRH and let $\tau\geq t_q$.
Then we have for $1/2 < \sigma \leq 10m$,
$$
\arg{\frac{G_1}{L}(\sigma\pm i\tau,\chi)} \ll
\begin{cases}
\displaystyle \frac{m}{\sigma} & 3\leq\sigma\leq10m, \\\\
\displaystyle \frac{m^{1/2} \log{\log{(q\tau)}} + m}{\sigma-1/2} & 1/2<\sigma\leq3.
\end{cases}
$$
\end{lemma}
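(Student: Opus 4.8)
Here is how I would go about it.

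Since $\overline{(G_1/L)(s,\chi)}=(G_1/L)(\bar s,\overline\chi)$ and the constant $t_q$ does not distinguish $\chi$ from $\overline\chi$, it suffices to bound $\arg(G_1/L)(\sigma+i\tau,\chi)$ with $\tau\geq t_q$. The range $3m\leq\sigma\leq10m$ is immediate from Lemma~\ref{lem1.1}: there $2(1+8m/\sigma)(1+1/m)^{-\sigma}<1$, and the half-plane $\operatorname{Re}(s)\geq3m$ contains no zero of $L$ and, since $L'(s,\chi)\neq0$ for $\sigma\geq1+3m/2$, no zero of $L'$, so it is free of the branch cuts of $\log(G_1/L)$; hence $\log(G_1/L)$ is the principal logarithm of $1+O\bigl((1+8m/\sigma)(1+1/m)^{-\sigma}\bigr)$ and $\arg(G_1/L)(\sigma+i\tau,\chi)\ll(1+8m/\sigma)(1+1/m)^{-\sigma}\ll m/\sigma$. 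It remains to treat $3\leq\sigma<3m$ and $1/2<\sigma\leq3$.

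For these ranges I would follow the argument along a horizontal segment. Fixing a reference abscissa $\sigma_2$ — namely $\sigma_2=3m$ for the first range and $\sigma_2=3$ for the second, in each case with $\arg(G_1/L)(\sigma_2+i\tau,\chi)$ already controlled by what precedes — one has $\arg(G_1/L)(\sigma+i\tau,\chi)=\arg(G_1/L)(\sigma_2+i\tau,\chi)+\bigl(\text{variation of }\arg\text{ along }[\sigma,\sigma_2]+i\tau\bigr)$, and the variation is $\ll1+\nu+\mathcal Z$, where $\nu$ is the number of sign changes of $\operatorname{Re}(G_1/L)(u+i\tau,\chi)$ for $u\in(\sigma,\sigma_2)$ and $\mathcal Z$ counts the zeros of $L'$ in the strip $\sigma<\operatorname{Re}(s)<\sigma_2$, $\operatorname{Im}(s)>\tau$ whose downward branch cut the segment meets; by Proposition~\ref{zfr-right} and a crude count these last zeros are few enough to be absorbed into the stated error. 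The substantive term is $\nu$. Because $G_1/L$ has poles at the zeros of $L$, Jensen's inequality cannot be applied to it directly; instead I would work with the \emph{entire} function $f(z):=G_1(z+i\tau,\chi)L(z-i\tau,\overline\chi)$, for which $f(u)=(G_1/L)(u+i\tau,\chi)\,|L(u+i\tau,\chi)|^2$ when $u$ is real, so that (using GRH to guarantee $L(u+i\tau,\chi)\neq0$ on the segment) the sign changes of $\operatorname{Re}f(u)$ are exactly those of $\operatorname{Re}(G_1/L)(u+i\tau,\chi)$. Applying Jensen's inequality to $\tfrac12\bigl(f(z)+\overline{f(\bar z)}\bigr)$ on a disc centred near $\sigma_2$ then gives $\nu\ll\log\bigl(\max|f|\big/|f(\sigma_2)|\bigr)+O(1)$.

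To land on the two stated shapes one estimates $|f|$ on the bounding circle piecewise. Where $\operatorname{Re}(z)$ is large, Lemma~\ref{lem1.1} shows $|G_1|$ is bounded (not of size $m^{\operatorname{Re}(z)}$) and $|L|\ll1$; where $\operatorname{Re}(z)$ is near $1/2$ one uses the functional equation $L(s,\chi)=F(s,\chi)L(1-s,\overline\chi)$ together with Lemmas~\ref{mon13.2.6},~\ref{mon12.6},~\ref{logdevF} and Proposition~\ref{ntchi_grh}: here the factor $m^{\operatorname{Re}(z)}$ from $G_1$ contributes $m^{1/2}$, the bound $|\log L|$ of Lemma~\ref{mon13.2.6} contributes $\log\log(q\tau)$, and $|L'/L|$ contributes a factor comparable to $1/(\operatorname{Re}(z)-1/2)$ — it is essential here not to fall back on bounding $\arg G_1$ and $\arg L$ separately via Lemma~\ref{mon13.2.11}, which would be far too lossy. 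Choosing the radius proportional to $\sigma$ when $\sigma\geq3$ and to $\sigma-1/2$ when $\sigma\leq3$, and, when one disc does not reach from $\sigma_2$ down to $\sigma$, iterating in a dyadic cascade of such discs, produces the denominators $\sigma$, resp.\ $\sigma-1/2$, and the numerators $m$, resp.\ $m^{1/2}\log\log(q\tau)+m$.

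The main obstacle I anticipate is the bookkeeping forced by the exponential factor $m^{s}$ in $G_1$: a single crude bound for $|G_1|$ on a disc of radius $\asymp m$ is of size $m^{cm}$ for a positive constant $c$ and hence useless, so every circle must be split into an outer part controlled by the Dirichlet series (Lemma~\ref{lem1.1}) and an inner part controlled by the functional equation and the zero-counting estimates, and one must then verify that the resulting Jensen bound really comes out as $m/\sigma$ — not merely as $\log m$ — uniformly down to $\sigma=3$, and as $\bigl(m^{1/2}\log\log(q\tau)+m\bigr)/(\sigma-1/2)$ uniformly down to $\sigma$ near $1/2$. Keeping the radii small enough that the relevant circles never reach the abscissae where $|G_1|$ or $|L|$ is uncontrolled, yet large enough to join $\sigma$ to a reference abscissa at which the argument is already small, is the delicate quantitative point, and the scarcity of such reference abscissae — pushed out near $\operatorname{Re}(s)=3m$ by the normalization in $G_1$ — is what makes the dyadic cascade, rather than a single disc, necessary.
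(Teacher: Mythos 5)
Your overall strategy (control the argument by counting sign changes of the real part along a horizontal segment, and count those via Jensen's theorem on discs reaching from a large reference abscissa down to $\sigma$, splitting each circle into an arc where Lemma \ref{lem1.1} applies and a short arc near the critical line) is exactly the paper's. But there is a genuine gap at the central step, and it stems from a false premise. You assert that Jensen cannot be applied to $G_1/L$ directly because of its poles at zeros of $L$, and you therefore replace it by the entire function $f(z)=G_1(z+i\tau,\chi)L(z-i\tau,\overline{\chi})$. Under GRH, however, all poles of $(G_1/L)(z\pm i\tau,\chi)$ lie on $\operatorname{Re}(z)=1/2$, and the discs in question are centred at a point $\gg m$ on the real axis with radius strictly less than the distance to the line $\operatorname{Re}(z)=1/2$ (this is where the constraint $\epsilon<\sigma-1/2$ comes from), so they contain no poles at all; the paper applies Jensen directly to $\tfrac12\bigl((G_1/L)(z\pm i\tau,\chi)+(G_1/L)(z\mp i\tau,\overline{\chi})\bigr)$. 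This is not merely a stylistic difference: it is what makes the stated bound achievable.

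The reason your substitute fails quantitatively is that off the real axis the two factors of $f$ sit at \emph{different} points, $z+i\tau$ and $z-i\tau$, so on the bounding circle you cannot recombine them into the ratio $L'/L$; you are forced to bound $\log|L'(z+i\tau,\chi)|$ on its own. Near $\operatorname{Re}=1/2$ the only available bound (Lemma \ref{lem4'}, via Lemma \ref{mon13.2.6}) is of size $(\log(q\tau))^{2(1-\sigma)}/\log\log(q\tau)\asymp\log(q\tau)/\log\log(q\tau)$, not $\log\log(q\tau)$ as you claim. Integrating this over the short arc of measure $\asymp m^{-1/2}$ and multiplying by the Jensen prefactor $\asymp m/(\sigma-1/2)$ yields a contribution of order $m^{1/2}\log(q\tau)/\bigl((\sigma-1/2)\log\log(q\tau)\bigr)$ --- i.e.\ the weaker Lemma \ref{lem5'}-type bound, not the $m^{1/2}\log\log(q\tau)/(\sigma-1/2)$ asserted in Lemma \ref{lem3'}. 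The whole point of working with $G_1/L=-m^s(L'/L)(s,\chi)/(\chi(m)\log m)$ is that on the short arc one can invoke Lemma \ref{mon12.6} together with Proposition \ref{ntchi_grh} to get $|(L'/L)(x+iy,\chi)|\ll\log(q(|y|+1))/(x-1/2)$, whose logarithm is only $\log\log(q\tau)+\log\bigl(1/(x-1/2)\bigr)+O(\log m)$. To repair your argument, discard $f$, apply Jensen to the symmetrized $G_1/L$ itself, and use the $L'/L$ bound on the short arc; the dyadic cascade of discs is then also unnecessary, since a single disc per $\sigma$ (with the standard $n(R)\log(R'/R)\le\int_0^{R'}n(t)t^{-1}dt$ trick) already produces the denominators $\sigma$ and $\sigma-1/2$.
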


\begin{proof}
Let $\tau \geq t_q$ and $1/2 < \sigma \leq 10m$.
Let
$$
u_{G_1/L} = u_{G_1/L}(\sigma, \tau; \chi) := \#\left\{ u \in [\sigma, 11m] \mid \operatorname{Re}\left( \frac{G_1}{L}(u \pm i\tau, \chi) \right) = 0 \right\},
$$
then
$$ \left| \arg{\frac{G_1}{L}(\sigma \pm i\tau, \chi)} \right| \leq \left( u_{G_1/L} + 1 \right) \pi. $$
To estimate $u_{G_1/L}$, we set
$$
H_1(z,\chi) := \frac{1}{2} \left( \frac{G_1}{L}(z \pm i\tau, \chi) + \frac{G_1}{L}(z \mp i\tau, \overline{\chi}) \right)
$$
and
$$
n_{H_1}(r,\chi) := \#\{ z\in\mathbb{C} \mid H_1(z,\chi) = 0, |z - 11m| \leq r \}.
$$
Since $H_1(x,\chi) = \operatorname{Re}( (G_1/L)(x \pm i\tau, \chi) )$ for $x\in\mathbb{R}$, we have $u_{G_1/L} \leq n_{H_1}(11m-\sigma,\chi)$ for $1/2 < \sigma \leq 10m$.

Now we estimate $n_{H_1}(11m-\sigma,\chi)$.
We take $\epsilon = \epsilon_{\sigma, \tau} > 0$.
It is easy to show that
$$
n_{H_1}(11m-\sigma,\chi) \leq \frac{1}{\log{\left(1+\epsilon/(11m-\sigma)\right)}} \int_0^{11m-\sigma+\epsilon} \frac{n_{H_1}(r,\chi)}{r} dr.
$$
Applying Jensen's theorem (cf. \cite[Section 3.61]{tit1}), we have
\begin{align*}
\int_0^{11m-\sigma+\epsilon} \frac{n_{H_1}(r,\chi)}{r} dr &=
\frac{1}{2\pi} \int_0^{2\pi} \log{ |H_1( 11m + (11m-\sigma+\epsilon)e^{i\theta}, \chi )| } d\theta \\
&\quad\quad- \log{|H_1(11m, \chi)|}.
\end{align*}
Applying the second inequality in Lemma \ref{lem1.1}, we can easily see that $\log{|H_1(11m, \chi)|}=O(1)$.
Therefore
\begin{align*}
\left| \arg{\frac{G_1}{L}(\sigma \pm i\tau, \chi)} \right|
&\leq \frac{1}{\log{(1+\epsilon/(11m-\sigma))}} \\
&\quad\quad\times \left( \frac{1}{2\pi} \int_0^{2\pi} \log{ |H_1( 11m+(11m-\sigma+\epsilon)e^{i\theta}, \chi )| } d\theta + C \right)
\end{align*}
for some absolute constant $C>0$.

Now we divide the rest of the proof in two cases:
\begin{enumerate}
\item[(a)] For $3\leq\sigma\leq10m$, we restrict $\epsilon$ to satisfy $0<\epsilon\leq\sigma-2$. Then $11m+(11m-\sigma+\epsilon)\cos{\theta} \geq 2$.
Applying the second inequality in Lemma \ref{lem1.1}, we can easily obtain
$$
|H_1( 11m+(11m-\sigma+\epsilon)e^{i\theta}, \chi )| \leq \frac{100m}{11m+(11m-\sigma+\epsilon)\cos{\theta}}.
$$
By using Jensen's theorem (cf. \cite[Section 3.61]{tit1}), we can show that for $c>r>0$,
\begin{equation} \label{eq:log.cos}
\frac{1}{2\pi} \int_0^{2\pi} \log{|c + r \cos{\theta}|} d\theta = \log{\frac{c+\sqrt{c^2-r^2}}{2}}
\end{equation}
holds.
By using \eqref{eq:log.cos}, we can easily show that
\begin{align*}
\frac{1}{2\pi} &\int_0^{2\pi} \log{ |H_1( 11m+(11m-\sigma+\epsilon)e^{i\theta}, \chi )| } d\theta \\
&\quad\quad\leq \log{(100m)} - \frac{1}{2\pi} \int_0^{2\pi} \log{(11m+(11m-\sigma+\epsilon)\cos{\theta})} d\theta \\
&\quad\quad= \log{(100m)} - \log{\frac{11m+\sqrt{11m^2-(11m-\sigma+\epsilon)^2}}{2}} \\
&\quad\quad\leq \log{(100m)} - \log{\frac{11m}{2}} \ll 1.
\end{align*}
Note that $\epsilon/(11m-\sigma) \leq 10$, thus $\log{(1+\epsilon/(11m-\sigma))} \geq \epsilon/(11m-\sigma)$.
Hence
$$ \arg{\frac{G_1}{L}(\sigma \pm i\tau, \chi)} \leq \frac{11m-\sigma}{\epsilon} \ll \frac{m}{\epsilon}. $$
By taking $\epsilon=\sigma-2$, we obtain
$$ \arg{\frac{G_1}{L}(\sigma \pm i\tau, \chi)} \ll \frac{m}{\sigma}. $$
This is the first inequality in Lemma \ref{lem3'}.

\item[(b)] For $1/2<\sigma\leq3$, we restrict $\epsilon$ to satisfy $0<\epsilon<\sigma-1/2$ and we divide the interval of integration into
\begin{itemize}
\item $ \mathcal{I}_1 := \{ \theta\in[0,2\pi] \mid 11m+(11m-\sigma+\epsilon)\cos{\theta} \geq 2 \} $ and
\item $ \mathcal{I}_2 := \{ \theta\in[0,2\pi] \mid 11m+(11m-\sigma+\epsilon)\cos{\theta} < 2 \} $.
\end{itemize}
Since $11m+(11m-\sigma+\epsilon)\cos{\theta}>1/2$ and $11m-\sigma+\epsilon<11m$, on $\mathcal{I}_1$, as in the calculation of case (a), we can show that
\begin{align*}
\frac{1}{2\pi} &\int_{\theta\in\mathcal{I}_1} \log{ |H_1( 11m+(11m-\sigma+\epsilon)e^{i\theta}, \chi )| } d\theta \\
&\quad\quad\leq \frac{1}{2\pi} \int_{\theta\in\mathcal{I}_1} \log{ \frac{100m}{11m+(11m-\sigma+\epsilon)\cos{\theta}} } d\theta \\
&\quad\quad\leq \frac{1}{2\pi} \int_0^{2\pi} \log{ \frac{100m}{11m+(11m-\sigma+\epsilon)\cos{\theta}} } d\theta \ll 1.
\end{align*}

Now we estimate the integral on $\mathcal{I}_2$.
Setting
$$ \cos{\theta_0} := \frac{11m-2}{11m-\sigma+\epsilon} $$
for $\theta_0\in(0,\pi/2)$, we have $\mathcal{I}_2 = (\pi-\theta_0, \pi+\theta_0)$.
Applying Lemma \ref{mon12.6} and Proposition \ref{ntchi_grh}, and noting that $(L'/L)(x+iy,\chi) = O(1)$ when $x\geq2$, we have
$$ \frac{L'}{L}(x+iy, \chi) = O\left( \frac{\log{(q(|y|+1))}}{x-1/2} \right) $$
for $1/2<x\leq A$, for any fixed $A\geq2$.
Thus,
$$
|H_1( 11m + (11m-\sigma+\epsilon)e^{i\theta}, \chi )|
\leq C_1\frac{m^2}{\log{m}} \frac{\log{(q(\tau+11m))}}{11m + (11m-\sigma+\epsilon)\cos{\theta} - 1/2}
$$
for some absolute constant $C_1>0$.
Hence
\begin{align*}
&\frac{1}{2\pi} \int_{\theta\in\mathcal{I}_2} \log{ |H_1( 11m + (11m-\sigma+\epsilon)e^{i\theta}, \chi )| } d\theta \\
&\quad\quad\leq \frac{1}{2\pi} \int_{\pi-\theta_0}^{\pi+\theta_0} \log{ \frac{C_1 m^2}{\log{m}} \frac{\log{(q(\tau+11m))}}{11m-1/2 + (11m-\sigma+\epsilon)\cos{\theta}} } d\theta \\
&\quad\quad= \frac{1}{2\pi} \int_{-\theta_0}^{\theta_0} \log{ \frac{C_1 m^2}{\log{m}} \frac{\log{(q(\tau+11m))}}{11m-1/2 - (11m-\sigma+\epsilon)\cos{\theta}} } d\theta \\
&\quad\quad= 
\frac{\theta_0}{\pi} \log{ \frac{C_1 m^2 \log{(q(\tau+11m))}}{\log{m}} } \\
&\quad\quad\quad\quad- \frac{1}{2\pi} \int_{-\theta_0}^{\theta_0} \log{ \left(11m-\frac{1}{2} + (11m-\sigma+\epsilon)\cos{\theta}\right) } d\theta.
\end{align*}

We note that $\cos{\theta_0} = 1+ O(1/m)$.
By using $1-\cos{\theta_0}=2\sin^2{(\theta_0/2)}$, we can show
$$ \theta_0 \ll \left| \sin^2{\frac{\theta_0}{2}} \right| \ll \frac{1}{m^{1/2}}. $$
Hence,
\begin{align*}
&\int_{-\theta_0}^{\theta_0} \log{ \left(11m-\frac{1}{2} + (11m-\sigma+\epsilon)\cos{\theta}\right) } d\theta \\
&\quad\quad= \int_{-\theta_0}^{\theta_0} \log{ \frac{11m-1/2 + (11m-\sigma+\epsilon)\cos{\theta}}{11m-1/2} } d\theta + \int_{-\theta_0}^{\theta_0} \log{(11m-1/2)} d\theta \\
&\quad\quad= \int_{-\theta_0}^{\theta_0} \log{\left( 1- \frac{11m-\sigma+\epsilon}{11m-1/2}\cos{\theta} \right)} d\theta + O\left( \frac{\log{m}}{m^{1/2}} \right)
\end{align*}
Recalling that $\sigma-\epsilon>1/2$ and $\theta_0\in(0,\pi/2)$, we have
$$
\int_{-\theta_0}^{\theta_0} \log{\left( 1-\cos{\theta} \right)} d\theta
\leq \int_{-\theta_0}^{\theta_0} \log{\left( 1- \frac{11m-\sigma+\epsilon}{11m-1/2}\cos{\theta} \right)} d\theta
\leq 0.
$$
Meanwhile,
\begin{align*}
\int_{-\theta_0}^{\theta_0} \log{\left( 1-\cos{\theta} \right)} d\theta
&= \int_{-\theta_0}^{\theta_0} \log{\left( 2\sin^2{\frac{\theta}{2}} \right)} d\theta
= 2\theta_0 \log{2} + 4 \int_0^{\theta_0} \log{\left( \sin{\frac{\theta}{2}} \right)} d\theta \\
&= 2\theta_0 \log{2} + 4 \int_0^{\theta_0} \log{\frac{\sin{(\theta/2)}}{\theta/2}} d\theta + 4 \int_0^{\theta_0} \log{\frac{\theta}{2}} d\theta \\
&= O\left( \theta_0 \right) + O\left( \theta_0^3 \right) + O\left( \theta_0\log{\theta_0^{-1}} \right)
= O\left( \frac{\log{m}}{m^{1/2}} \right).
\end{align*}

Therefore when $1/2<\sigma\leq3$, we have
\begin{align*}
\frac{1}{2\pi} \int_0^{2\pi} &\log{ |H_1( 11m+(11m-\sigma+\epsilon)e^{i\theta}, \chi )| } d\theta \\
&\quad\quad= \frac{1}{2\pi} \left( \int_{\theta\in\mathcal{I}_1} + \int_{\theta\in\mathcal{I}_2} \right) \log{ |H_1( 11m+(11m-\sigma+\epsilon)e^{i\theta}, \chi )| } d\theta \\
&\quad\quad\ll 1 + \frac{\log{\log{(q(\tau+11m))}}}{m^{1/2}} + \frac{\log{m}}{m^{1/2}}
\ll 1 + \frac{\log{\log{(q\tau)}}}{m^{1/2}}.
\end{align*}
Since $0 < \epsilon/(11m-\sigma) < 1$, we have $\log{(1+\epsilon/(11m-\sigma))} \gg \epsilon/m$, thus
$$
\arg{\frac{G_1}{L}(\sigma \pm i\tau, \chi)} \ll \frac{m}{\epsilon} \left( 1 + \frac{\log{\log{(q\tau)}}}{m^{1/2}} \right).
$$
Taking $\epsilon=(\sigma-1/2)/2$,
we obtain the second inequality in Lemma \ref{lem3'}.
\end{enumerate}
\end{proof}


\begin{lemma} \label{lem4'}
Assume GRH and let $A\geq2$ be fixed. Then there exists a constant $C_0>0$ such that
$$
\left| L'(\sigma+it,\chi) \right| \leq \exp{ \left( C_0\left( \frac{(\log q\tau)^{2(1-\sigma)}}{\log\log{(q\tau)}} + (\log{(q\tau)})^{1/10} \right) \right) }
$$
holds for $1/2 - 1/\log{\log{(q\tau)}} \leq \sigma \leq A$ and $\tau = |t|+4$.
\end{lemma}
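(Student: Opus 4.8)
The plan is to bound $L'$ by a bound on $L$ via Cauchy's integral formula, and then to invoke Lemma~\ref{mon13.2.6}, the functional equation, and elementary estimates near and to the right of $\operatorname{Re}(s)=1$. From $L'(s,\chi)=\frac{1}{2\pi i}\oint_{|z-s|=r}L(z,\chi)(z-s)^{-2}\,dz$ one gets $|L'(\sigma+it,\chi)|\le r^{-1}\max_{|z-s|=r}|L(z,\chi)|$, and I would take $r=(\log\log(q\tau))^{-1}$. On the circle $|\operatorname{Im}(z)-t|\le r<1$, so $\widetilde\tau:=|\operatorname{Im}(z)|+4\asymp\tau$ and hence $\log(q\widetilde\tau)\asymp\log(q\tau)$, $\log\log(q\widetilde\tau)\asymp\log\log(q\tau)$; moreover $\operatorname{Re}(z)\in[\sigma-r,\sigma+r]$, so $(\log q\widetilde\tau)^{2(1-\operatorname{Re}(z))}\ll(\log q\tau)^{2(1-\sigma)}$ (the exponent moves by only $O(r)$, costing a bounded factor). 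The factor $r^{-1}=\log\log(q\tau)$ and the term $\log\log\log(q\tau)$ that appear below will be absorbed into $\exp\bigl(C_0(\log q\tau)^{1/10}\bigr)$.

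It remains to bound $|L(z,\chi)|$ for $|z-s|=r$, which I would do according to $\operatorname{Re}(z)$. If $\operatorname{Re}(z)\ge\tfrac12+(\log\log q\widetilde\tau)^{-1}$ and $\operatorname{Re}(z)$ is bounded away from $1$, Lemma~\ref{mon13.2.6} gives $\log|L(z,\chi)|\ll(\log q\tau)^{2(1-\sigma)}(\log\log q\tau)^{-1}+\log\log\log(q\tau)$ at once (the factor $(1-\operatorname{Re}(z))^{-1}$ being $O(1)$); when $\operatorname{Re}(z)$ is close to $1$ one keeps $\log|L(z,\chi)|\ll(\log q\tau)^{1/10}$ by the same lemma where it applies and otherwise by the classical bound $|L(z,\chi)|\ll\log(q\widetilde\tau)$ valid near $\operatorname{Re}(s)=1$. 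If instead $\operatorname{Re}(z)<\tfrac12+(\log\log q\widetilde\tau)^{-1}$, I would use $L(z,\chi)=F(z,\chi)L(1-z,\overline\chi)$: here $|\tfrac12-\operatorname{Re}(z)|\le 2(\log\log q\tau)^{-1}$, so \eqref{eq:logF} (or Stirling applied to \eqref{eq:F}) gives $\log|F(z,\chi)|\ll\log(q\tau)(\log\log q\tau)^{-1}$, while $\operatorname{Re}(1-z)>\tfrac12$ and $\log|L(1-z,\overline\chi)|\ll\log(q\tau)(\log\log q\tau)^{-1}$ by Lemma~\ref{mon13.2.6} for $\overline\chi$ together with the bound $\log|L(\tfrac12+iv,\overline\chi)|\ll\log(q\widetilde\tau)(\log\log q\widetilde\tau)^{-1}$ on the critical line (which holds under GRH, cf.\ \cite{sel}); as $(\log q\tau)^{2(1-\sigma)}\gg\log(q\tau)$ when $\sigma\le\tfrac12+(\log\log q\tau)^{-1}$, this is consistent with the asserted bound. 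Finally, for $\sigma\ge\tfrac32$ the circle lies in $\operatorname{Re}(z)\ge1$ and the Dirichlet series gives $|L'(s,\chi)|=O(1)$, while for $1\le\sigma\le\tfrac32$ one argues directly: under GRH $|\sigma+it-\rho|\ge\sigma-\tfrac12\ge\tfrac12$ for every zero $\rho$ of $L(\cdot,\chi)$, so Lemma~\ref{mon12.6} and Proposition~\ref{ntchi_grh} give $(L'/L)(s,\chi)\ll\log(q\tau)$, whence $|L'(s,\chi)|\ll(\log q\tau)^2$ using $|L(s,\chi)|\ll\log(q\tau)$.

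Putting the cases together yields $\max_{|z-s|=r}|L(z,\chi)|\le\exp\bigl(O((\log q\tau)^{2(1-\sigma)}(\log\log q\tau)^{-1}+(\log q\tau)^{1/10})\bigr)$ for $\tfrac12-(\log\log q\tau)^{-1}\le\sigma\le A$, and dividing by $r$ and enlarging $C_0$ gives the lemma. The delicate point is the behaviour near the critical line: there Lemma~\ref{mon13.2.6} does not apply, and one must reflect through the functional equation and feed in a bound for $|L|$ on $\operatorname{Re}(s)=\tfrac12$ itself; the secondary nuisance is the loss of uniformity of Lemma~\ref{mon13.2.6} as $\operatorname{Re}(z)\to1$, which is what forces the auxiliary use of the elementary bound $|L|\ll\log(q\tau)$ in a neighbourhood of $\operatorname{Re}(s)=1$.
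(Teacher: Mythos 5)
Your proof is correct and follows essentially the same route as the paper, whose entire proof reads ``Applying Lemma \ref{mon13.2.6} and Cauchy's integral formula, Lemma \ref{lem4'} follows.'' You have simply supplied the details the paper leaves implicit: the choice of radius $(\log\log{(q\tau)})^{-1}$ in Cauchy's formula, and the auxiliary bounds needed where Lemma \ref{mon13.2.6} does not apply directly (the reflection through the functional equation near and to the left of the critical line, and the elementary estimates for $\operatorname{Re}(s)$ near or above $1$).
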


\begin{proof}
Applying Lemma \ref{mon13.2.6} and Cauchy's integral formula, Lemma \ref{lem4'} follows.
\end{proof}


\begin{lemma} \label{lem5'}
Assume GRH.
Then for any $1/2 \leq \sigma \leq 3/4$, we have
\begin{align*}
\arg{G_1(\sigma\pm i\tau,\chi)} &= O\bigg( m^{1/2}(\log{\log{(q\tau)}}) \\
&\quad\quad\times \left( m^{1/2} + (\log{(q\tau)})^{1/10} + \frac{(\log{(q\tau)})^{2(1-\sigma)}}{(\log{\log{(q\tau)}})^{3/2}}\right) \bigg).
\end{align*}
\end{lemma}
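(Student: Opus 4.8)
The plan is to run the scheme used in the proof of Lemma \ref{lem3'}, but applied to the entire function $G_1(s,\chi)$ of \eqref{eq:G_1} rather than to $(G_1/L)(s,\chi)$. Since $L(s,\chi)$ is entire, $G_1(s,\chi)$ is entire, and there is no obstruction to letting the real part of the variable reach $1/2$ (and even drop a little below it); the price is that, near the critical line, the Dirichlet series bound of Lemma \ref{lem1.1} is useless and must be replaced by the convexity bound of Lemma \ref{lem4'}. Fix $\sigma\in[1/2,3/4]$ and $\tau\geq t_q$, and let $u_{G_1}$ be the number of $x\in[\sigma,11m]$ with $\operatorname{Re}G_1(x\pm i\tau,\chi)=0$. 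Exactly as in Lemma \ref{lem3'}, together with the observation that $\arg G_1(11m\pm i\tau,\chi)=O(1)$ (immediate from the first inequality of Lemma \ref{lem1.1}, since $(1+1/m)^{-11m}\leq 2^{-11}$), we get $|\arg G_1(\sigma\pm i\tau,\chi)|\leq(u_{G_1}+1)\pi$. Because $\overline{G_1(s,\chi)}=G_1(\overline s,\overline\chi)$, the entire function
\[
H_1(z,\chi):=\tfrac{1}{2}\bigl(G_1(z+i\tau,\chi)+G_1(z-i\tau,\overline\chi)\bigr)
\]
satisfies $H_1(x,\chi)=\operatorname{Re}G_1(x+i\tau,\chi)$ for real $x$ (with the evident companion function for the line $\operatorname{Im}(s)=-\tau$), so $u_{G_1}\leq n_{H_1}(11m-\sigma,\chi)$, where $n_{H_1}(r,\chi)$ counts the zeros of $H_1(\cdot,\chi)$ in $|z-11m|\leq r$.

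Next I would apply Jensen's theorem (cf. \cite[Section 3.61]{tit1}) on the disc $|z-11m|\leq R$ with $R:=11m-\sigma+\epsilon$, choosing $\epsilon$ a small fixed multiple of $1/\log\log(q\tau)$. Since $\log|H_1(11m,\chi)|=O(1)$ (again by Lemma \ref{lem1.1}) and $1/\log(1+\epsilon/(11m-\sigma))\asymp(11m-\sigma)/\epsilon\asymp m\log\log(q\tau)$, this gives
\[
n_{H_1}(11m-\sigma,\chi)\ll m\log\log(q\tau)\left(\frac{1}{2\pi}\int_0^{2\pi}\log\bigl|H_1(11m+Re^{i\theta},\chi)\bigr|\,d\theta+O(1)\right),
\]
so everything reduces to the circle integral, on which $\operatorname{Re}z=11m+R\cos\theta$ ranges over $[\sigma-\epsilon,22m-\sigma+\epsilon]$. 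Following case (b) of Lemma \ref{lem3'}, split the circle into the arc $\mathcal{I}_1$ where $\operatorname{Re}z\geq 2$ and the arc $\mathcal{I}_2$ where $\operatorname{Re}z<2$; the latter is $(\pi-\theta_0,\pi+\theta_0)$ with $\cos\theta_0=(11m-2)/R=1+O(1/m)$, hence $\theta_0\ll m^{-1/2}$. The cut at $\operatorname{Re}z=2$ is exactly the crossover between the two available size bounds: on $\mathcal{I}_1$ the first inequality of Lemma \ref{lem1.1} bounds $|H_1|$ by a constant multiple of $m/\operatorname{Re}z$ (uniformly, even when $\operatorname{Re}z$ is as large as $22m$), and then \eqref{eq:log.cos} shows, just as in Lemma \ref{lem3'}, that the contribution of $\mathcal{I}_1$ is $O(1)$.

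The decisive estimate is the one on $\mathcal{I}_2$. There $|G_1(z,\chi)|=(m^{\operatorname{Re}z}/\log m)\,|L'(z,\chi)|$, and Lemma \ref{lem4'} (applicable because $\tfrac{1}{2}-\epsilon\leq\operatorname{Re}z<2$ for $\epsilon$ as chosen, and $\log(q(\tau+11m))\asymp\log(q\tau)$ since $m\ll\log q$) gives, for $z\in\mathcal{I}_2$,
\[
\log|H_1(z,\chi)|\ll\log m+(\log(q\tau))^{1/10}+\frac{(\log(q\tau))^{2(1-\operatorname{Re}z)}}{\log\log(q\tau)}.
\]
Writing $\theta=\pi+\phi$ with $|\phi|\leq\theta_0$, one has $\operatorname{Re}z=11m-R\cos\phi$, so $1-\operatorname{Re}z=(1-\sigma+\epsilon)-R(1-\cos\phi)$ with $R(1-\cos\phi)\gg m\phi^2$, whence $(\log(q\tau))^{2(1-\operatorname{Re}z)}\ll(\log(q\tau))^{2(1-\sigma)}e^{-cm\phi^2\log\log(q\tau)}$ for some $c>0$, the factor $(\log(q\tau))^{2\epsilon}$ being $O(1)$ by the choice of $\epsilon$. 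A Gaussian integral bound then gives
\[
\frac{1}{2\pi}\int_{-\theta_0}^{\theta_0}\frac{(\log(q\tau))^{2(1-\operatorname{Re}z)}}{\log\log(q\tau)}\,d\phi\ll\frac{(\log(q\tau))^{2(1-\sigma)}}{\log\log(q\tau)}\cdot\frac{1}{m^{1/2}(\log\log(q\tau))^{1/2}}=\frac{(\log(q\tau))^{2(1-\sigma)}}{m^{1/2}(\log\log(q\tau))^{3/2}},
\]
while the $\log m$ and $(\log(q\tau))^{1/10}$ terms contribute only $\ll\theta_0\bigl(\log m+(\log(q\tau))^{1/10}\bigr)\ll m^{-1/2}\bigl(\log m+(\log(q\tau))^{1/10}\bigr)$. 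Adding the contributions of $\mathcal{I}_1$ and $\mathcal{I}_2$, the circle integral is $\ll 1+m^{-1/2}(\log(q\tau))^{1/10}+(\log(q\tau))^{2(1-\sigma)}/(m^{1/2}(\log\log(q\tau))^{3/2})$, and multiplying by $m\log\log(q\tau)$ (and using $m^{1/2}\log m\,\log\log(q\tau)\ll m\log\log(q\tau)$) yields precisely the bound asserted in Lemma \ref{lem5'}. The main obstacle is this last step: without exploiting the rapid decay of $(\log(q\tau))^{2(1-\operatorname{Re}z)}$ as $\theta$ leaves $\pi$ one would only reach $m^{1/2}(\log(q\tau))^{2(1-\sigma)}$ in the last term, which is too large by a factor $(\log\log(q\tau))^{1/2}$; making this work forces $R$ to be of size $\asymp m$ and $\epsilon$ to be tuned to $1/\log\log(q\tau)$, and one must also check (routinely) that $\log\log(q(\tau+11m))$ may be replaced by $\log\log(q\tau)$ throughout.
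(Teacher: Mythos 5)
Your proposal is correct and follows essentially the same route as the paper: Jensen's formula on a disc of radius $\asymp m$ centred on the real axis, the split of the boundary circle at $\operatorname{Re}z=2$ with Lemma \ref{lem1.1} on the right arc and Lemma \ref{lem4'} on the left arc, the choice $\epsilon\asymp(\log\log(q\tau))^{-1}$, and the crucial extra saving of $(m\log\log(q\tau))^{-1/2}$ from the concentration of $(\log(q\tau))^{2(1-\operatorname{Re}z)}$ near $\theta=\pi$. The only differences are cosmetic: the paper centres the disc at $1+3m/2$ rather than $11m$, and it evaluates the concentration integral exactly as the Bessel function $I_0(2(1+3m/2-\sigma+\epsilon)\log\log(q\tau))$ and then invokes $I_0(x)\sim e^x/\sqrt{2\pi x}$, whereas you obtain the same bound by a direct Gaussian estimate.
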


\begin{proof}
The proof is similar to that of Lemma \ref{lem3'} but we provide the details for clarity.
Let $1/2 \leq \sigma \leq 3/4$ and $\tau>1$ be large.
Put
$$
u_{G_1} = u_{G_1}(\sigma, \tau; \chi) := \#\left\{ u \in [\sigma, 1+3m/2] \mid \operatorname{Re}\left( G_1(u \pm i\tau, \chi) \right) = 0 \right\},
$$
then
$$ \left| \arg{G_1(\sigma \pm i\tau, \chi)} \right| \leq \left( u_{G_1} + 1 \right) \pi. $$
To estimate $u_{G_1}$, we set
$$
X_1(z,\chi) := \frac{G_1(z \pm i\tau, \chi) + G_1(z \mp i\tau, \overline{\chi})}{2}
$$
and
$$
n_{X_1}(r,\chi) := \#\{ z\in\mathbb{C} \mid X_1(z,\chi) = 0, |z - (1+3m/2)| \leq r \}.
$$
Then we have $u_{G_1} \leq n_{X_1}(1+3m/2-\sigma,\chi)$.

Now we estimate $n_{X_1}(1+3m/2-\sigma,\chi)$.
For each $\sigma \in [1/2, 3/4]$, we take $\epsilon = \epsilon_{\sigma, \tau}$ satisfying $0 < \epsilon \leq \sigma - 1/2 + (\log{\log{(q\tau)}})^{-1}$.
It is easy to show that
$$
n_{X_1}(1+3m/2-\sigma,\chi) \leq \frac{1+3m}{\epsilon} \int_0^{1+3m/2-\sigma+\epsilon} \frac{n_{X_1}(r,\chi)}{r} dr.
$$
Applying Jensen's theorem (cf. \cite[Section 3.61]{tit1}), we have
\begin{align*}
&\int_0^{1+3m/2-\sigma+\epsilon} \frac{n_{X_1}(r,\chi)}{r} dr \\
&\quad\quad= \frac{1}{2\pi} \int_0^{2\pi} \log{ |X_1( 1+3m/2 + (1+3m/2-\sigma+\epsilon)e^{i\theta}, \chi )| } d\theta \\
&\quad\quad\quad\quad - \log{|X_1(1+3m/2, \chi)|}.
\end{align*}
By using the first inequality in Lemma \ref{lem1.1}, we can easily show
$$ \log{|X_1(1+3m/2, \chi)|} = O(1). $$

As in the proof of Lemma \ref{lem3'}, we divide the interval of integration into
\begin{itemize}
\item $ \mathcal{J}_1 := \{ \theta\in[0,2\pi] \mid 1+3m/2+(1+3m/2-\sigma+\epsilon)\cos{\theta} \geq 2 \} $ and
\item $ \mathcal{J}_2 := \{ \theta\in[0,2\pi] \mid 1+3m/2+(1+3m/2-\sigma+\epsilon)\cos{\theta} < 2 \} $.
\end{itemize}
Then similarly, applying the first inequality in Lemma \ref{lem1.1} and \eqref{eq:log.cos}, we can show that
$$
\frac{1}{2\pi} \int_{\theta\in\mathcal{J}_1} \log{ |X_1( 1+3m/2+(1+3m/2-\sigma+\epsilon)e^{i\theta}, \chi )| } d\theta = O(1).
$$
Next we estimate the integral on $\mathcal{J}_2$.
Setting
$$ \cos{\theta_0} := \frac{1+3m/2-2}{1+3m/2-\sigma+\epsilon} $$
for $\theta_0\in(0,\pi/2)$, we have $\mathcal{J}_2 = (\pi-\theta_0, \pi+\theta_0)$ and $\theta_0 = O(m^{-1/2})$.
Applying Lemma \ref{lem4'}, we have
\begin{align*}
&|X_1( 1+3m/2 + (1+3m/2-\sigma+\epsilon)e^{i\theta}, \chi )| \\
&\quad\quad\leq \frac{m^2}{\log{m}} \exp{\left( C'_0 \left( \frac{(\log{(q\tau)})^{-3m -2(1+3m/2-\sigma+\epsilon)\cos{\theta}}}{\log{\log{(q\tau)}}} + (\log{(q\tau)})^{1/10} \right) \right)}
\end{align*}
for some absolute constant $C'_0>0$.
Thus,
\begin{align*}
\frac{1}{2\pi} &\int_{\theta\in\mathcal{J}_2} \log{ |X_1( 1+3m/2 + (1+3m/2-\sigma+\epsilon)e^{i\theta}, \chi )| } d\theta \\
&\quad\quad\leq \theta_0\left( \log{ \frac{m^2}{\log{m}}} + C'_0 (\log{(q\tau)})^{1/10} \right) \\
&\quad\quad\quad\quad+ \frac{C'_0(\log{(q\tau)})^{-3m}}{2\pi\log{\log{(q\tau)}}} \int_{\pi-\theta_0}^{\pi+\theta_0} (\log{(q\tau)})^{-2(1+3m/2-\sigma+\epsilon)\cos{\theta}} d\theta\\
&\quad\quad\leq \theta_0\left( \log{ \frac{m^2}{\log{m}}} + C'_0 (\log{(q\tau)})^{1/10} \right) \\
&\quad\quad\quad\quad+ \frac{C'_0(\log{(q\tau)})^{-3m}}{2\pi\log{\log{(q\tau)}}} \int_0^{2\pi} (\log{(q\tau)})^{-2(1+3m/2-\sigma+\epsilon)\cos{\theta}} d\theta\\
&\quad\quad= \theta_0\left( \log{ \frac{m^2}{\log{m}}} + C'_0 (\log{(q\tau)})^{1/10} \right) \\
&\quad\quad\quad\quad+ \frac{C'_0(\log{(q\tau)})^{-3m}}{\log{\log{(q\tau)}}} I_0( 2 (1+3m/2-\sigma+\epsilon) \log{\log{(q\tau)}} ),
\end{align*}
where $I_\nu$ is the Bessel function.
Since
$$ I_0(x) = \frac{e^x}{\sqrt{2\pi x}} (1+o(1)), $$
there exists a constant $C'_1 > 0$ such that
$$
I_0( 2 (1+3m/2-\sigma+\epsilon) \log{\log{(q\tau)}} ) \leq C'_1\frac{(\log{(q\tau)})^{2(1+3m/2-\sigma+\epsilon)}}{(m\log{\log{(q\tau)}})^{1/2}}.
$$
Hence,
\begin{align*}
\frac{1}{2\pi} &\int_{\theta\in\mathcal{J}_2} \log{ |X_1( 1+3m/2 + (1+3m/2-\sigma+\epsilon)e^{i\theta}, \chi )| } d\theta \\
&\quad\quad\ll \frac{1}{m^{1/2}} \left( (\log{(q\tau)})^{1/10} + \frac{(\log{(q\tau)})^{2(1-\sigma+\epsilon)}}{(\log{\log{(q\tau)}})^{3/2}} \right).
\end{align*}

Concluding the above, we have
\begin{align*}
\arg{G_1(\sigma \pm i\tau, \chi)} &\ll n_{X_1}(1+3m/2-\sigma,\chi) \\
&\ll \frac{m}{\epsilon} \left( 1+ \frac{1}{m^{1/2}} \left( (\log{(q\tau)})^{1/10} + \frac{(\log{(q\tau)})^{2(1-\sigma+\epsilon)}}{(\log{\log{(q\tau)}})^{3/2}} \right) \right).
\end{align*}
Taking $\epsilon = (\log{\log{(q\tau)}})^{-1}$ completes the proof.
\end{proof}


\section{Proof of theorems}
\label{sec:l_prfs}

\subsection{Evaluation of the main terms}
\quad\\ \vskip-3mm

We first prove two propositions which state out the main terms of the equations in our main theorems.
We use the functions $F(s,\chi)$ and $G_1(s,\chi)$ defined in the previous section (see equations \eqref{eq:F} and \eqref{eq:G_1}).\\


The following proposition states out the main term of the equation in Theorem \ref{cha1'}.

\begin{proposition} \label{prop2'}
Assume GRH. Take $t_q$ as in \eqref{eq:t_q}, and set $a_q := 4m$.
From Proposition \ref{zfr-right}, we note that $L'(s,\chi) \neq 0$ when $\sigma \geq a_q$.
Then for $T \geq t_q$ which satisfies $L(\sigma\pm iT,\chi)\neq0$ and $L'(\sigma\pm iT,\chi)\neq0$ for any $\sigma\in\mathbb{R}$, we have
\begin{align*}
\sum_{\substack{\rho' = \beta' + i\gamma',\\ t_q < \pm \gamma' \leq T}} \left( \beta' - \frac{1}{2} \right)
&= \frac{T}{2\pi} \log{\log{\frac{qT}{2\pi}}} + \frac{T}{2\pi} \left( \frac{1}{2} \log{m} - \log{\log{m}} \right)
- \frac{1}{q} \operatorname{Li} \left( \frac{qT}{2\pi} \right) \\
&\quad\quad \mp \frac{1}{2\pi} \int_{1/2}^{a_q} \left( -\arg{L(\sigma \pm it_q,\chi)} + \arg{G_1(\sigma \pm it_q,\chi)} \right) d\sigma \\
&\quad\quad \pm \frac{1}{2\pi} \int_{1/2}^{a_q} \left( -\arg{L(\sigma \pm iT,\chi)} + \arg{G_1(\sigma \pm iT,\chi)} \right) d\sigma \\
&\quad\quad+ O(\log{\log{q}}) +O(m),
\end{align*}
where the logarithmic branches are taken as in section \ref{sec:lems3-5}.
\end{proposition}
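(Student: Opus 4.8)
The plan is to apply Littlewood's lemma to both $G_1(s,\chi)$ and $L(s,\chi)$ on the rectangles $R_{\pm}$ with vertices $\tfrac12\pm it_q$, $a_q\pm it_q$, $a_q\pm iT$, $\tfrac12\pm iT$, and then subtract. Since $G_1(s,\chi)$ differs from $L'(s,\chi)$ only by the nonvanishing entire factor $-m^{s}/(\chi(m)\log m)$, the zeros of $G_1$ in $R_{+}$ are exactly the zeros $\rho'$ of $L'$ with $t_q<\gamma'\leq T$: by Proposition~\ref{zfr-right} none lie on the right edge $\sigma=a_q$, by Proposition~\ref{zfr-left} together with the fact that $t_q\ll 1$ is a fixed large constant all of them satisfy $\beta'\geq\tfrac12$ (so none lie strictly to the left of $R_{+}$), any lying on the left edge $\sigma=\tfrac12$ contribute $0$ to the sum, and \eqref{eq:t_q} rules out zeros on the horizontal edges. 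Under GRH, $L(s,\chi)$ is zero-free in $\operatorname{Re}(s)>\tfrac12$ and on $\sigma=a_q$, so Littlewood's lemma for $L$ contributes $0$ on the left. Subtracting the two identities, and taking the branches as in Section~\ref{sec:lems3-5}, the horizontal edges produce precisely the terms $\int_{1/2}^{a_q}(-\arg L+\arg G_1)(\sigma\pm iT,\chi)\,d\sigma$ and $\int_{1/2}^{a_q}(-\arg L+\arg G_1)(\sigma\pm it_q,\chi)\,d\sigma$ of the statement, while the two vertical edges collapse to $\tfrac1{2\pi}\int_{t_q}^{T}\bigl(\log|\tfrac{G_1}{L}(\tfrac12\pm it,\chi)|-\log|\tfrac{G_1}{L}(a_q\pm it,\chi)|\bigr)\,dt$. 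By the conjugation symmetry $\overline{L(s,\chi)}=L(\bar s,\overline\chi)$ the ``$-$'' rectangle is handled exactly like the ``$+$'' one, so I only discuss $t>0$.

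For the right edge, $a_q=4m$ lies where $\tfrac{G_1}{L}(s,\chi)=1+O\bigl((1+1/m)^{-a_q}\bigr)$ by Lemma~\ref{lem1.1}; in fact $|\tfrac{G_1}{L}(s,\chi)-1|<1$ there, so $\log\tfrac{G_1}{L}(s,\chi)$ is a Dirichlet series $\sum_{k}b_k\nu_k^{-s}$ with every $\nu_k\geq m_1/m>1$, where $m_1$ is the least prime power coprime to $q$ exceeding $m$. Integrating term by term over $t\in[t_q,T]$ and using the oscillation of $\nu_k^{-it}$ removes the length of the range: $\int_{t_q}^{T}\log|\tfrac{G_1}{L}(a_q\pm it,\chi)|\,dt\ll\sum_k|b_k|\,\nu_k^{-a_q}/\log\nu_k\ll m$. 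The bound is $O(m)$, not $O(1)$, because $\log\nu_1=\log(1+(m_1-m)/m)$ can be as small as $\asymp 1/m$, when $m_1=m+1$.

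The left edge is the core of the argument. On $\sigma=\tfrac12$ one has $\log|\tfrac{G_1}{L}(\tfrac12+it,\chi)|=\tfrac12\log m-\log\log m+\log|\tfrac{L'}{L}(\tfrac12+it,\chi)|$. Differentiating the functional equation $L(s,\chi)=F(s,\chi)L(1-s,\overline\chi)$ gives $\tfrac{L'}{L}(s,\chi)=\tfrac{F'}{F}(s,\chi)-\tfrac{L'}{L}(1-s,\overline\chi)$; since $|F(\tfrac12+it,\chi)|=1$ the value $\tfrac{F'}{F}(\tfrac12+it,\chi)$ is real, and Lemma~\ref{logdevF} gives $\tfrac{F'}{F}(\tfrac12+it,\chi)=-\log\tfrac{qt}{2\pi}+O(1/t)$, whence $\operatorname{Re}\tfrac{L'}{L}(\tfrac12+it,\chi)=\tfrac12\,\tfrac{F'}{F}(\tfrac12+it,\chi)=-\tfrac12\log\tfrac{qt}{2\pi}+O(1/t)$ — this is where GRH and the near-critical-line bounds of Section~\ref{sec:l_prelim} (especially Lemmas~\ref{mon13.2.11} and \ref{mon12.6}, controlling $\tfrac{L'}{L}$) enter. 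Writing $\tfrac{L'}{L}(\tfrac12+it,\chi)=X(t)+iY(t)$ with $X(t)=-\tfrac12\log\tfrac{qt}{2\pi}+O(1/t)$, one gets, up to a negligible error,
$$\log\Bigl|\tfrac{L'}{L}(\tfrac12+it,\chi)\Bigr|=\log\log\tfrac{qt}{2\pi}-\log 2+\tfrac12\log\!\Bigl(1+\bigl(Y(t)/X(t)\bigr)^{2}\Bigr).$$
Integrating the first term by means of the antiderivative $u\log\log u-\operatorname{Li}(u)$ yields $\int_{t_q}^{T}\log\log\tfrac{qt}{2\pi}\,dt=T\log\log\tfrac{qT}{2\pi}-\tfrac{2\pi}{q}\operatorname{Li}\!\bigl(\tfrac{qT}{2\pi}\bigr)+O(\log\log q)$, the error coming from the endpoint $t_q\ll 1$. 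The last term is the crux: one must prove
$$\tfrac12\int_{t_q}^{T}\log\!\Bigl(1+\bigl(Y(t)/X(t)\bigr)^{2}\Bigr)\,dt=T\log 2+O(m)+O(\log\log q),$$
equivalently that the geometric mean over $t$ of $\bigl|\tfrac12+iY(t)/\tfrac{F'}{F}(\tfrac12+it,\chi)\bigr|$ equals $1$, or that $\arg\tfrac{L'}{L}(\tfrac12+it,\chi)$ is equidistributed modulo $\pi$ with a strong enough error term. The appearance of the constant $\log 2$ is dictated by $\int_{0}^{\pi/2}\log\cos\theta\,d\theta=-\tfrac{\pi}{2}\log 2$ (equivalently $\int_{-\infty}^{\infty}\tfrac{\log(1+x^{2})}{\pi(1+x^{2})}\,dx=2\log 2$). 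This mean-value estimate is the step I expect to be the main obstacle.

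Assembling everything: the constant $\tfrac12\log m-\log\log m$ contributes $\tfrac{T}{2\pi}\bigl(\tfrac12\log m-\log\log m\bigr)$ (up to $O(m)$ from the $t_q$ endpoint), the $\log\log\tfrac{qt}{2\pi}$ integral contributes $\tfrac{T}{2\pi}\log\log\tfrac{qT}{2\pi}-\tfrac1q\operatorname{Li}\!\bigl(\tfrac{qT}{2\pi}\bigr)+O(\log\log q)$, the two occurrences of $\log 2$ cancel, and the right edge is $O(m)$; adding the (unevaluated) horizontal $\arg(G_1/L)$ integrals gives the claimed formula for the ``$+$'' part, and the ``$-$'' part is identical by symmetry. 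All $t_q$-dependent quantities are absorbed into $O(m)+O(\log\log q)$ because $t_q\ll 1$, which completes the proof.
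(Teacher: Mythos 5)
Your decomposition of the main terms is correct and your bookkeeping (Littlewood's lemma for $G_1$ and for $L$, subtract, oscillate the Dirichlet series of $\log G_1$ on the right edge to get $O(m)$, integrate $\log\log\frac{qt}{2\pi}$ to produce the $\operatorname{Li}$ term) matches what actually happens. But the proof is not complete: the step you yourself flag as ``the main obstacle,''
$$\frac12\int_{t_q}^{T}\log\Bigl(1+\bigl(Y(t)/X(t)\bigr)^{2}\Bigr)\,dt=T\log 2+O(m)+O(\log\log q),$$
is exactly the hard analytic content of the proposition, and you give no argument for it. Since $\tfrac{L'}{L}(s,\chi)=\tfrac{F'}{F}(s,\chi)\bigl(1-\tfrac{1}{F'/F(s,\chi)}\tfrac{L'}{L}(1-s,\overline{\chi})\bigr)$ and $|\tfrac{F'}{F}(\tfrac12+it,\chi)|=2|X(t)|$, your claim is equivalent to
$$\int_{t_q}^{T}\log\Bigl|1-\tfrac{1}{F'/F}\tfrac{L'}{L}(1-s,\overline{\chi})\Bigr|_{s=\frac12+it}\,dt=O(1),$$
which is the integral the paper calls $I_{15}$. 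Proving it is where GRH enters in an essential, non-routine way: Lemma \ref{lem1.3} (via Corollary 10.18 of \cite{mon}) shows $\operatorname{Re}\tfrac{L'}{L}(s,\chi)<0$ just to the left of the critical line for $|t|$ large, while $\arg\tfrac{F'}{F}(s,\chi)\in(5\pi/6,7\pi/6)$ there, so the argument of $1-\tfrac{1}{F'/F}\tfrac{L'}{L}(1-s,\overline{\chi})=\tfrac{L'/L}{F'/F}(s,\chi)$ stays in $(-2\pi/3,2\pi/3)$; Lemma \ref{lem1.2} then lets one shift the contour to $\operatorname{Re}(s)\to-\infty$, where the logarithm tends to $0$, converting the vertical log-modulus integral into horizontal argument integrals that are $O(1)$ by the bounded-argument estimate. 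Nothing in your write-up substitutes for this; ``equidistribution of $\arg\tfrac{L'}{L}(\tfrac12+it,\chi)$ modulo $\pi$'' is not something you can invoke.

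A secondary, fixable issue: you place the left edge of both rectangles exactly on $\sigma=1/2$, where under GRH all zeros of $L(s,\chi)$ (and possibly some of $L'$) lie, so Littlewood's lemma does not apply directly and $\log|L(\tfrac12+it,\chi)|$, $\log|\tfrac{L'}{L}(\tfrac12+it,\chi)|$ have singularities on the contour. The paper avoids this by taking the left abscissa $b=1/2-\delta$, carrying out the whole argument there (which is also where the sign information $\operatorname{Re}\tfrac{L'}{L}<0$ needed for $I_{15}$ lives), and letting $\delta\to0$ at the end; you would need the same regularization.
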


\begin{proof}
We first set $a_q := 4m$ and take $t_q$ as in \eqref{eq:t_q}.
We again note that $t_q \ll t_1 \ll 1$.
We also take $\sigma_1$ which satisfies Lemma \ref{lem1.2} and fix it.
Take $T \geq t_q$ such that $L(\sigma\pm iT,\chi)\neq0$ and $L'(\sigma\pm iT,\chi)\neq0$ for all $\sigma\in\mathbb{R}$.
Let $\delta \in (0, 1/2]$ and put $b := 1/2 - \delta$.

Applying Littlewood's lemma (cf. \cite[Section 3.8]{tit1}) to $G_1(s,\chi)$ on the rectangles with vertices $b \pm it_q$, $a_q \pm it_q$, $a_q \pm iT$, and $b \pm iT$, we obtain
\begin{equation} \label{eq:littlewood_1}
\begin{aligned}
&2\pi \sum_{\substack{\rho' = \beta' + i\gamma',\\ t_q < \pm \gamma' \leq T}} (\beta' - b) \\
&\quad\quad= \int_{t_q}^T \log{|G_1(b \pm it, \chi)|} dt - \int_{t_q}^T \log{|G_1(a_q \pm it, \chi)|} dt \\
&\quad\quad\quad\quad \mp \int_b^{a_q} \arg{G_1(\sigma \pm it_q, \chi)} d\sigma \pm \int_b^{a_q} \arg{G_1(\sigma \pm iT, \chi)} d\sigma \\
&\quad\quad=: I^\pm_1 + I^\pm_2 + I^\pm_3 + I^\pm_4.
\end{aligned}
\end{equation}
Applying the first inequality in Lemma \ref{lem1.1}, we can show that $I^+_2 = I^-_2 = O(m)$.
Below we estimate $I^+_1$.
\begin{equation} \label{eq:i_1}
\begin{aligned}
I^+_1 &= \int_{t_q}^T \log{|G_1(b+it,\chi)|} dt = \int_{t_q}^T \log{\left( \frac{m^b}{\log{m}}|L'(b+it,\chi)| \right)} dt \\
&= \int_{t_q}^T \log{\frac{m^b}{\log{m}}} dt + \int_{t_q}^T \log{|L'(b+it,\chi)|} dt \\
&= (b\log{m} - \log{\log{m}})T + \int_{t_q}^T \log{|F(b+it,\chi)|} dt \\
&\quad\quad+ \int_{t_q}^T \log{\left| \frac{F'}{F}(b+it,\chi) \right|} dt + \int_{t_q}^T \log{|L(1-b-it,\overline{\chi})|} dt \\
&\quad\quad+ \int_{t_q}^T \log{\left| 1 - \frac{1}{\frac{F'}{F}(b+it,\chi)} \frac{L'}{L}(1-b-it,\overline{\chi}) \right|} dt
+ O(t_q \log{m}) \\
&=: I_{11} + I_{12} + I_{13} + I_{14} + I_{15} + O(\log{m}).
\end{aligned}
\end{equation}
Here we recall that $t_q = O(1)$ from our choice of $t_q$ in \eqref{eq:t_q}.

From \eqref{eq:logF} and Stirling's formula \eqref{eq:stirling}, we have
\begin{align*}
I_{12} &= \int_{t_q}^T \log{|F(b+it,\chi)|} dt
= \int_{t_q}^T \left( \left( \frac{1}{2} - b \right) \log{\frac{qt}{2\pi}} + O\left( \frac{1}{t^2} \right) \right) dt \\
&= \left( \frac{1}{2} - b \right) \left( T\log{\frac{qT}{2\pi}} - T - t_q\log{\frac{qt_q}{2\pi}} + t_q \right) + O(1).
\end{align*}
Lemma \ref{logdevF} gives us
$$
\log{\left| \frac{F'}{F}(b+it,\chi) \right|} = \operatorname{Re} \left( \log{ \frac{F'}{F}(b+it,\chi) } \right)
= \log{\log{\frac{q|t|}{2\pi}}} + O\left( \frac{1}{t^2\log{(q|t|)}} \right),
$$
thus we have
\begin{align*}
I_{13} &= \int_{t_q}^T \log{\left| \frac{F'}{F}(b+it,\chi) \right|} dt
= \int_{t_q}^T \left( \log{\log{\frac{qt}{2\pi}}} + O\left( \frac{1}{t^2\log{(q|t|)}} \right) \right) dt \\
&= T\log{\log{\frac{qT}{2\pi}}} - t_q\log{\log{\frac{qt_q}{2\pi}}} - \int_{t_q}^T \frac{1}{\log{\frac{qt}{2\pi}}} dt + O(1) \\
&= T\log{\log{\frac{qT}{2\pi}}} - t_q\log{\log{\frac{qt_q}{2\pi}}} - \frac{2\pi}{q} \operatorname{Li} \left( \frac{qT}{2\pi} \right) + O\left( t_q \right) \\
&= T\log{\log{\frac{qT}{2\pi}}} - \frac{2\pi}{q} \operatorname{Li} \left( \frac{qT}{2\pi} \right) + O\left( \log{\log{q}} \right).
\end{align*}

Next, we estimate $I_{14}$.
We note that $\overline{L(\overline{s},\overline{\chi})} = L(s,\chi)$, hence $|L(1-b-it,\overline{\chi})| = |L(1-b+it,\chi)|$.
Take the logarithmic branch of $\log{L(s,\chi)}$ so that $\log{L(s,\chi)} = \sum_{n=2}^\infty \chi(n)\Lambda(n) (\log{n})^{-1} n^{-s}$ holds for Re$(s)>1$ and that it is holomorphic in $\mathbb{C} \backslash \{ \rho+\lambda \mid L(\rho,\chi) = 0, \lambda\leq0 \}$.
Then applying Cauchy's integral theorem to $\log{L(s,\chi)}$ on the rectangle with vertices $1-b + it_q$, $a_q + it_q$, $a_q + iT$, $1-b + iT$ and taking the imaginary part, we can show that
\begin{align*}
I_{14} &= \int_{t_q}^T \log{|L(1-b-it,\overline{\chi})|} dt = \int_{t_q}^T \log{|L(1-b+it,\chi)|} dt \\
&= \int_{1-b}^{a_q} \arg{L(\sigma+it_q,\chi)} d\sigma - \int_{1-b}^{a_q} \arg{L(\sigma+iT,\chi)} d\sigma +O(1).
\end{align*}

Finally we estimate $I_{15}$.
Since $L(s,\chi) = F(s,\chi)L(1-s,\overline{\chi})$, we have
\begin{equation} \label{eq:i_15_eq}
\frac{1}{\frac{F'}{F}(s,\chi)} \frac{L'}{L}(s,\chi) = 1 - \frac{1}{\frac{F'}{F}(s,\chi)} \frac{L'}{L}(1-s,\overline{\chi}).
\end{equation}
From Lemma \ref{lem1.3}, the function on the left-hand side of \eqref{eq:i_15_eq} is holomorphic and has no zeros in $\sigma_1 \leq \sigma < 1/2, |t| \geq t_1 - 1$.
From Lemma \ref{lem1.2}, the function on the right-hand side of \eqref{eq:i_15_eq} is holomorphic and has no zeros in $\sigma \leq \sigma_1, |t| \geq 2$.
Thus we can determine
$$
\log{\left( 1 - \frac{1}{\frac{F'}{F}(s,\chi)} \frac{L'}{L}(1-s,\overline{\chi}) \right)}
$$
so that it tends to $0$ as $\sigma \rightarrow -\infty$ which follows from Lemma \ref{lem1.2}, and that it is holomorphic in $\sigma < 1/2, |t| > t_q - 1 (> t_1 - 1)$.
Now we apply Cauchy's integral theorem to it on the trapezoid with vertices $-t_q + it_q$, $b + it_q$, $b + iT$, and $-T + iT$.
Lemma \ref{lem1.2} allows us to show
$$
\left( \int_{\sigma_1+iT}^{-T+iT} + \int_{-T+iT}^{-t_q+it_q} + \int_{-t_q+it_q}^{\sigma_1+it_q} \right)
\log{\left( 1 - \frac{1}{\frac{F'}{F}(s,\chi)} \frac{L'}{L}(1-s,\overline{\chi}) \right)} ds = O(1).
$$
Thus taking the imaginary part, we obtain
\begin{align*}
&\int_{t_q}^T \log{\left| 1 - \frac{1}{\frac{F'}{F}(b+it,\chi)} \frac{L'}{L}(1-b-it,\overline{\chi}) \right|} dt \\
&\quad\quad
= \int_{\sigma_1}^b \arg{\left( 1 - \frac{1}{\frac{F'}{F}(\sigma+iT,\chi)} \frac{L'}{L}(1-\sigma-iT,\overline{\chi}) \right)} d\sigma \\
&\quad\quad\quad\quad
- \int_{\sigma_1}^b \arg{\left( 1 - \frac{1}{\frac{F'}{F}(\sigma+it_q,\chi)} \frac{L'}{L}(1-\sigma-it_q,\overline{\chi}) \right)} d\sigma + O(1)
\end{align*}
Now we let
$$
\log{\left( \frac{1}{\frac{F'}{F}(s,\chi)} \frac{L'}{L}(s,\chi) \right)} = \log{\left( 1 - \frac{1}{\frac{F'}{F}(s,\chi)} \frac{L'}{L}(1-s,\overline{\chi}) \right)}
$$
and determine the logarithmic branch of $\log{(F'/F)(s,\chi)}$ and $\log{(L'/L)(s,\chi)}$ in the region $\sigma_1 \leq \sigma < 1/2, |t| \geq t_q - 1$ as in Lemma \ref{lem1.3}.
Note that both of them and the functions on both sides of \eqref{eq:i_15_eq} are all continuous with respect to $s$ in $\sigma_1 \leq \sigma < 1/2, |t| \geq t_q - 1$. Furthermore, the two regions $\sigma_1 \leq \sigma < 1/2, t \geq t_q - 1$ and $\sigma_1 \leq \sigma < 1/2, -t \geq t_q - 1$ are connected. Thus we have
$$
\arg{\left( 1 - \frac{1}{\frac{F'}{F}(s,\chi)} \frac{L'}{L}(1-s,\overline{\chi}) \right)} = -\arg{\frac{F'}{F}(s,\chi)} + \arg{\frac{L'}{L}(s,\chi)} + 2\pi n_q
$$
for some $n_q\in\mathbb{Z}$ that depends only at most on $q$.
From our choice of logarithmic branch, we have $n_q = 0$. Thus,
\begin{equation} \label{eq:ineq_15}
-\frac{2\pi}{3} < \arg{\left( 1 - \frac{1}{\frac{F'}{F}(s,\chi)} \frac{L'}{L}(1-s,\overline{\chi}) \right)} < \frac{2\pi}{3}
\end{equation}
for $\sigma_1 \leq \sigma < 1/2, |t| \geq t_q - 1$.
Therefore we obtain
$$
I_{15} = \int_{t_q}^T \log{\left| 1 - \frac{1}{\frac{F'}{F}(b+it,\chi)} \frac{L'}{L}(1-b-it,\overline{\chi}) \right|} dt = O(1).
$$

Collecting the above calculations, we have
\begin{align*}
I^+_1 &= T\log{\log{\frac{qT}{2\pi}}} + \left( b\log{m} - \log{\log{m}} \right)T
- \frac{2\pi}{q} \operatorname{Li} \left( \frac{qT}{2\pi} \right) \\
&\quad\quad+ \left( \frac{1}{2} - b \right) \left( T\log{\frac{qT}{2\pi}} - T - t_q\log{\frac{qt_q}{2\pi}} + t_q \right) \\
&\quad\quad + \int_{1-b}^{a_q} \arg{L(\sigma+it_q,\chi)} d\sigma
- \int_{1-b}^{a_q} \arg{L(\sigma+iT,\chi)} d\sigma + O( \log{\log{q}} ).
\end{align*}
Similarly, we can show that
\begin{align*}
I^-_1 &= T\log{\log{\frac{qT}{2\pi}}} + \left( b\log{m} - \log{\log{m}} \right)T
- \frac{2\pi}{q} \operatorname{Li} \left( \frac{qT}{2\pi} \right) \\
&\quad\quad+ \left( \frac{1}{2} - b \right) \left( T\log{\frac{qT}{2\pi}} - T - t_q\log{\frac{qt_q}{2\pi}} + t_q \right) \\
&\quad\quad- \int_{1-b}^{a_q} \arg{L(\sigma-it_q,\chi)} d\sigma
+ \int_{1-b}^{a_q} \arg{L(\sigma-iT,\chi)} d\sigma + O( \log{\log{q}} ).
\end{align*}

Thus we have
\begin{align*}
2\pi \sum_{\substack{\rho' = \beta' + i\gamma',\\ t_q < \pm \gamma' \leq T}} (\beta' - b)
&= T\log{\log{\frac{qT}{2\pi}}} + \left( b\log{m} - \log{\log{m}} \right)T
- \frac{2\pi}{q} \operatorname{Li} \left( \frac{qT}{2\pi} \right) \\
&\quad\quad+ \left( \frac{1}{2} - b \right) \left( T\log{\frac{qT}{2\pi}} - T - t_q\log{\frac{qt_q}{2\pi}} + t_q \right) \\
&\quad\quad \pm \int_{1-b}^{a_q} \arg{L(\sigma \pm it_q, \chi)} d\sigma
\mp \int_{1-b}^{a_q} \arg{L(\sigma \pm iT, \chi)} d\sigma \\
&\quad\quad \mp \int_b^{a_q} \arg{G_1(\sigma \pm it_q, \chi)} d\sigma
\pm \int_b^{a_q} \arg{G_1(\sigma\pm iT, \chi)} d\sigma \\
&\quad\quad+ O( \log{\log{q}} ) + O(m).
\end{align*}
Taking $\delta \rightarrow 0$, we obtain Proposition \ref{prop2'}.
\end{proof}


The following proposition states out the main term of the equation in Theorem \ref{cha2'}.

\begin{proposition} \label{prop6'}
Assume GRH. Take $t_q$ as in \eqref{eq:t_q}. Then for $T \geq 2$ which satisfies $L(\sigma \pm iT, \chi) \neq 0$ and $L'(\sigma \pm iT, \chi) \neq 0$ for all $\sigma\in\mathbb{R}$, we have
\begin{align*}
N_1(T,\chi) &= \frac{T}{\pi} \log{\frac{qT}{2m\pi}} - \frac{T}{\pi}
- A(t_q,\chi) - B(t_q,\chi) + A(T,\chi) + B(T,\chi) \\
&\quad\quad+ A(-t_q,\chi) + B(-t_q,\chi) - A(-T,\chi) - B(-T,\chi) + O(m^{1/2}\log{q}),
\end{align*}
where
$$ A(\tau,\chi) := \frac{1}{2\pi} \arg{G_1\left(\frac{1}{2}+i\tau,\chi\right)}, \quad B(\tau,\chi) := \frac{1}{2\pi} \arg{L\left(\frac{1}{2}+i\tau,\chi\right)}, $$
and the logarithmic branches are taken as in section \ref{sec:lems3-5}.
\end{proposition}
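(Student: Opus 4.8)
The plan is to count the zeros of $L'(s,\chi)$ in $\operatorname{Re}(s)>0$, $|\operatorname{Im}(s)|\le T$ by the argument principle applied to $G_1(s,\chi)$, in close parallel with the application of Littlewood's lemma in Proposition~\ref{prop2'}. Assume first $T\ge t_q$. I would split $N_1(T,\chi)$ into the zeros with $t_q<|\operatorname{Im}(s)|\le T$ and those with $\operatorname{Re}(s)>0$, $|\operatorname{Im}(s)|\le t_q$. Since $L'(s,\chi)\ne0$ for $\sigma\ge a_q=4m$ (Proposition~\ref{zfr-right}) and, by Proposition~\ref{zfr-left}, $L'(s,\chi)$ has no zeros with $\sigma\le0$ when $|t|\ge6$ (a fortiori when $|t|\ge t_q$), the first group of zeros all lies in $1/2\le\sigma<a_q$; I would fix $b=1/2-\delta$ with $\delta>0$ small, chosen to avoid zeros of $L'$ on $\operatorname{Re}(s)=b$, apply the argument principle to $G_1(s,\chi)$ on the two rectangles with vertices $b\pm it_q$, $a_q\pm it_q$, $a_q\pm iT$, $b\pm iT$, and let $\delta\to0$ at the end, exactly as in Proposition~\ref{prop2'}. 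The right side $\sigma=a_q$ and the ends of the horizontal sides near $\sigma=a_q$ contribute $O(1)$ by the first estimate of Lemma~\ref{lem1.1} (there $|G_1-1|<1/2$), while the horizontal sides at $\pm T$ and $\pm t_q$ contribute $\arg G_1(\tfrac12\pm iT,\chi)=2\pi A(\pm T,\chi)$ and $\arg G_1(\tfrac12\pm it_q,\chi)=2\pi A(\pm t_q,\chi)$ in the limit $\delta\to0$, with signs matching the statement.

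The heart of the computation is the critical-line side $\operatorname{Re}(s)=b$, $t_q\le|t|\le T$, where I would use the functional equation in the form
\[
L'(s,\chi)=F'(s,\chi)\,L(1-s,\overline\chi)\Bigl(1-\tfrac1{(F'/F)(s,\chi)}\tfrac{L'}{L}(1-s,\overline\chi)\Bigr),
\]
so that $\arg G_1(b+it,\chi)$ decomposes as $\arg\!\bigl(-m^{b+it}/(\chi(m)\log m)\bigr)+\arg(F'/F)(b+it,\chi)+\arg F(b+it,\chi)+\arg L(1-b-it,\overline\chi)+\arg(1-\cdots)$, all factors being nonzero on the side for $|t|\ge t_q$, whence the change of $\arg G_1$ is the sum of the changes of the five summands (with consistent branches). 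The $m^{s}$ factor together with the $(1/2-s)\log(q/2\pi)$ and $\log\Gamma(1-s)$ parts of $\arg F$ contribute $2T\log(qT/2m\pi)-2T+O(\log q)$ along the two critical-line sides — the $-2T\log m$ coming from $m^{s}$ is exactly what produces the factor $1/m$ inside the logarithm — and this is computed by Stirling's formula~\eqref{eq:stirling} applied to $\log\Gamma(1-s)$ as in~\eqref{eq:logF}, analogously to the estimate of $I_{12}$ in the proof of Proposition~\ref{prop2'}; the term $\arg(F'/F)(b+it,\chi)$ stays within $O(1)$ of $\pi$ by Lemma~\ref{logdevF} and Lemma~\ref{lem1.3}(1); the factor $1-(F'/F)^{-1}(L'/L)(1-s,\overline\chi)$ stays in $(-2\pi/3,2\pi/3)$ by~\eqref{eq:ineq_15}; and $\arg L(1-b-it,\overline\chi)$, upon letting $b\to1/2$ and using $\arg L(\tfrac12-i\tau,\overline\chi)=-\arg L(\tfrac12+i\tau,\chi)$ (from $\overline{L(\overline s,\overline\chi)}=L(s,\chi)$), produces precisely $2\pi\bigl(B(T,\chi)-B(t_q,\chi)+B(-t_q,\chi)-B(-T,\chi)\bigr)$. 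Dividing the total change of $\arg G_1$ around the two rectangles by $2\pi$ assembles the claimed main term together with the explicit $A$- and $B$-terms. (Equivalently, one can write $N_1(T,\chi)=N(T,\chi)+\frac1{2\pi}\Delta\arg(L'/L)(s,\chi)$ and feed in Proposition~\ref{ntchi_grh}.)

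It remains to handle the second group, whose cardinality $N_1^{(0)}$ — the number of zeros of $L'(s,\chi)$ with $\operatorname{Re}(s)>0$, $|\operatorname{Im}(s)|\le t_q$, which also absorbs the at most $O(1)$ exceptional zeros in $0<\sigma<1/2$ and the trivial zeros near $\sigma=0$ permitted by Proposition~\ref{zfr-left} — I would bound by $N_1^{(0)}=O(m^{1/2}\log q)$, via a Jensen's-theorem estimate on a disc containing $[1/2,a_q]\times[-t_q,t_q]$, controlling $|L'(s,\chi)|$ there through Lemmas~\ref{lem1.1} and~\ref{lem4'} and the functional equation, in the style of the proof of Lemma~\ref{lem5'}; this is where the factor $m^{1/2}$ enters. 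All remaining error terms — the $O(1)$ near $\sigma=a_q$, the constants such as $t_q\log(qt_q/2\pi)$ produced by Stirling, the $O(\log m)$ from the $m^{s}$ factor over $|t|\le t_q$, and so on — are $O(\log q)$, hence $O(m^{1/2}\log q)$, since $m<q$. Finally, for $2\le T<t_q$ the formula holds trivially: there $N_1(T,\chi)=O(\log q)$ by a crude form of the same argument (for instance by comparing with $N(t_q+1,\chi)$ via Proposition~\ref{ntchi_grh} and Lemma~\ref{mon12.6}), while the right-hand side is likewise $O(m^{1/2}\log q)$ by Lemmas~\ref{lem5'} and~\ref{mon13.2.11}. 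The main obstacle, both here and in the bound for $N_1^{(0)}$, is obtaining the correct control of $L'(s,\chi)$ and of the variation of $\arg G_1(s,\chi)$ in the region $|t|\le t_q$, where the strip $0<\sigma<a_q$ has width $\asymp m$ (growing with $q$).
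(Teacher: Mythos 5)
Your proposal is correct in substance, and the heart of the computation --- factoring $L'(s,\chi)=F'(s,\chi)L(1-s,\overline\chi)\bigl(1-(F'/F)^{-1}(L'/L)(1-s,\overline\chi)\bigr)$ on the line $\operatorname{Re}(s)=b=1/2-\delta$, using Lemma \ref{logdevF}, Lemma \ref{lem1.3} and \eqref{eq:ineq_15} to confine $\arg(F'/F)$ and the argument of the last factor to bounded intervals, extracting the main term from $m^{s}$ and $\arg F$ via Stirling, and reading off the $A$- and $B$-terms as $\delta\to0$ --- coincides with what the paper does. The packaging differs in two respects. First, you apply the argument principle directly to $G_1$ on the two rectangles, whereas the paper writes Littlewood's lemma at two nearby abscissas $b=1/2-\delta$ and $b'=1/2-\delta/2$, subtracts, divides by $\delta\pi$, and lets $\delta\to0$, recovering $A(\tau,\chi)$ and $B(\tau,\chi)$ by the mean value theorem applied to $\frac{1}{\pi\delta}\int_{b}^{b'}\arg G_1(\sigma\pm i\tau,\chi)\,d\sigma$ and its $L$-analogue; the two devices are equivalent (the subtraction is a discrete derivative of Littlewood's identity in the abscissa), but the paper's version turns every boundary contribution into a short horizontal integral and thereby avoids the corner and branch bookkeeping that a direct $\Delta\arg$ computation around a closed contour requires --- bookkeeping you would need to carry out carefully to make the five-factor decomposition on the vertical side consistent with the Section \ref{sec:lems3-5} branch used on the horizontal sides. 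Second, for the zeros with $|\operatorname{Im}(s)|\le t_q$ the paper simply quotes $N_1(t_q,\chi)=\frac{t_q}{\pi}\log\frac{qt_q}{2m\pi}-\frac{t_q}{\pi}+O(m^{1/2}\log(qt_q))$ from \cite[Theorem 5]{as}, while you propose to rederive the bound $O(m^{1/2}\log q)$ by Jensen's theorem on a disc of radius $\asymp m$; your mechanism for the $m^{1/2}$ saving (the arc on which the circle enters $\sigma<2$ has angular measure $O(m^{-1/2})$, and there $\log|G_1|\ll\log q$ via the functional equation and $\log q\gg m$) is precisely the one used in Lemma \ref{lem5'}, so this is viable, at the cost of reproving what the cited theorem supplies. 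The reduction of the case $2\le T<t_q$ to the error term is the same in both treatments.
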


\begin{proof}
Take $a_q, \sigma_1, t_q, T, \delta, b$ as in the beginning of the proof of Proposition \ref{prop2'}.
Let $b' := 1/2 - \delta/2$.
Replacing $b$ by $b'$ in \eqref{eq:littlewood_1}, we have
\begin{align*}
2\pi \sum_{\substack{\rho' = \beta' + i\gamma',\\ t_q < \pm \gamma' \leq T}} (\beta' - b')
&= \int_{t_q}^T \log{| G_1(b' \pm it, \chi) |} dt - \int_{t_q}^T \log{| G_1(a_q \pm it, \chi) |} dt \\
&\quad\quad \mp \int_{b'}^{a_q} \arg{G_1(\sigma \pm it_q, \chi)} d\sigma
\pm \int_{b'}^{a_q} \arg{G_1(\sigma \pm iT, \chi)} d\sigma.
\end{align*}
Subtracting these from \eqref{eq:littlewood_1}, we obtain
\begin{align*}
\delta\pi \sum_{\substack{\rho' = \beta' + i\gamma',\\ t_q < \pm \gamma' \leq T}} 1
&= \int_{t_q}^T \left( \log{| G_1(b \pm it, \chi) |} - \log{| G_1(b' \pm it, \chi) |} \right) dt \\
&\quad\quad \mp \int_b^{b'} \arg{G_1(\sigma \pm it_q, \chi)} d\sigma
\pm \int_b^{b'} \arg{G_1(\sigma \pm iT, \chi)} d\sigma \\
&=: J^\pm_1 + J^\pm_2 + J^\pm_3.
\end{align*}

We estimate $J^\pm_1$.
From \eqref{eq:i_1}, we have
\begin{align*}
J^+_1 &= \int_{t_q}^T \left( \log{|G_1(b+it,\chi)|} - \log{|G_1(b'+it,\chi)|} \right) dt \\
&= (b-b')(T-t_q)\log{m} + \int_{t_q}^T ( \log{|F(b+it,\chi)|} - \log{|F(b'+it,\chi)|} )dt \\
&\quad\quad
+ \int_{t_q}^T \left( \log{\left| \frac{F'}{F}(b+it,\chi) \right|} - \log{\left| \frac{F'}{F}(b'+it,\chi) \right|} \right) dt \\
&\quad\quad
+ \int_{t_q}^T \left( \log{|L(1-b-it,\overline{\chi})|} - \log{|L(1-b'-it,\overline{\chi})|} \right) dt \\
&\quad\quad
+ \int_{t_q}^T \Bigg( \log{\left| 1 - \frac{1}{\frac{F'}{F}(b+it,\chi)} \frac{L'}{L}(1-b-it,\overline{\chi}) \right|} \\
&\quad\quad\quad\quad
- \log{\left| 1 - \frac{1}{\frac{F'}{F}(b'+it,\chi)} \frac{L'}{L}(1-b'-it,\overline{\chi}) \right|} \Bigg) dt \\
&=: J_{11} + J_{12} + J_{13} + J_{14} + J_{15}.
\end{align*}
Applying Cauchy's theorem to $\log{F(s,\chi)}$ on the rectangle $C$ with vertices $b+it_q$, $b'+it_q$, $b'+iT$, $b+iT$, and taking the imaginary part, we have
$$
J_{12} = \int_b^{b'} \arg{F(\sigma+it_q,\chi)} d\sigma -  \int_b^{b'} \arg{F(\sigma+iT,\chi)} d\sigma.
$$
From \eqref{eq:logF}, we can show that
$$
J_{12} =  \left( T\log{\frac{qT}{2\pi}} - T \right) \frac{\delta}{2} - \left( t_q\log{\frac{qt_q}{2\pi}} - t_q \right) \frac{\delta}{2} + O(\delta)
$$
Next, we take the logarithmic branch of $\log{(F'/F)(s,\chi)}$ as in condition (1) of Lemma \ref{lem1.3}.
Applying Cauchy's integral theorem to $\log{(F'/F)(s,\chi)}$ on $C$ taking the imaginary part, we have
$$
J_{13} = \int_b^{b'} \arg{\frac{F'}{F}(\sigma+it_q,\chi)} d\sigma -  \int_b^{b'} \arg{\frac{F'}{F}(\sigma+iT,\chi)} d\sigma = O(\delta)
$$
To estimate $J_{14}$, we define a branch of $\log{L(s,\chi)}$ as in the estimation of $I_{14}$ in the proof of Proposition \ref{prop2'} and apply Cauchy's integral theorem on the rectangle with vertices $1-b'+it_q$, $1-b+it_q$, $1-b+iT$, $1-b'+iT$.
Taking the imaginary part we obtain
$$
J_{14} = -\int_{1-b'}^{1-b} \arg{L(\sigma+it_q,\chi)} d\sigma +  \int_{1-b'}^{1-b} \arg{L(\sigma+iT,\chi)} d\sigma.
$$
Finally, we define a branch of
$$
\log{\left( 1 - \frac{1}{\frac{F'}{F}(s,\chi)} \frac{L'}{L}(1-s,\overline{\chi}) \right)}
$$
as in the estimation of $I_{15}$ in the proof of Proposition \ref{prop2'} and apply Cauchy's integral theorem to it on $C$.
Taking the imaginary part, we have
\begin{align*}
J_{15} &= \int_b^{b'} \arg{\left( 1 - \frac{1}{\frac{F'}{F}(\sigma+it_q,\chi)} \frac{L'}{L}(1-\sigma-it_q,\overline{\chi}) \right)} d\sigma \\
&\quad\quad
- \int_b^{b'} \arg{\left( 1 - \frac{1}{\frac{F'}{F}(\sigma+iT,\chi)} \frac{L'}{L}(1-\sigma-iT,\overline{\chi}) \right)} d\sigma \\
&= O(\delta)
\end{align*}
by inequalities \eqref{eq:ineq_15}.
Then we estimate $J^-_1$ similarly.

We then obtain
\begin{align*}
\delta\pi \sum_{\substack{\rho' = \beta' + i\gamma',\\ t_q < \pm \gamma' \leq T}} 1
&= -(T-t_q) \frac{\delta}{2} \log{m} + \left( T\log{\frac{qT}{2\pi}} - T \right) \frac{\delta}{2} - \left( t_q\log{\frac{qt_q}{2\pi}} - t_q \right) \frac{\delta}{2} \\
&\quad\quad \mp \int_{1-b'}^{1-b} \arg{L(\sigma \pm it_q, \chi)} d\sigma
\pm \int_{1-b'}^{1-b} \arg{L(\sigma \pm iT, \chi)} d\sigma \\
&\quad\quad \mp \int_b^{b'} \arg{G_1(\sigma \pm it_q, \chi)} d\sigma
\pm \int_b^{b'} \arg{G_1(\sigma \pm iT, \chi)} d\sigma
+ O(\delta).
\end{align*}

Taking the limit $\delta\rightarrow0$ and applying the mean value theorem, for $\tau = \pm t_q$ and $\tau = \pm T$ we have
$$
\lim_{\delta\rightarrow0} \frac{1}{\pi\delta} \int_{1-b'}^{1-b} \arg{L(\sigma + i\tau, \chi)} d\sigma
= B(\tau, \chi)
$$
and
$$
\lim_{\delta\rightarrow0} \frac{1}{\pi\delta} \int_{1-b'}^{1-b} \arg{G_1(\sigma + i\tau, \chi)} d\sigma
= A(\tau, \chi)
$$
by noting that $b = 1/2 - \delta$ and $b' = 1/2 - \delta/2$.
Hence,
\begin{align*}
N_1(T,\chi) - N_1(t_q,\chi) &= \frac{T}{\pi} \log{\frac{qT}{2m\pi}} - \frac{T}{\pi}
- \left( \frac{t_q}{\pi} \log{\frac{qt_q}{2m\pi}} - \frac{t_q}{\pi} \right) \\
&\quad\quad- A(t_q,\chi) - B(t_q,\chi) + A(T,\chi) + B(T,\chi) \\
&\quad\quad+ A(-t_q,\chi) + B(-t_q,\chi) - A(-T,\chi) - B(-T,\chi) + O(1).
\end{align*}
Referring to \cite[Theorem 5]{as}), we see that
\begin{equation} \label{eq:n_1tau}
N_1(t_q,\chi) = \frac{t_q}{\pi}\log{\frac{qt_q}{2m\pi}} - \frac{t_q}{\pi} + O\left( m^{1/2}\log{(qt_q)} \right).
\end{equation}
Hence,
\begin{align*}
N_1(T,\chi) &= \frac{T}{\pi} \log{\frac{qT}{2m\pi}} - \frac{T}{\pi}
- A(t_q,\chi) - B(t_q,\chi) + A(T,\chi) + B(T,\chi) \\
&\quad\quad+ A(-t_q,\chi) + B(-t_q,\chi) - A(-T,\chi) - B(-T,\chi) + O(m^{1/2}\log{q}).
\end{align*}

If $2 \leq T < t_q$, then $N_1(T,\chi) \leq N_1(t_q,\chi) = O(m^{1/2}\log{q})$,
which can be included in the error term.
Thus the proof is complete.
\end{proof}


\subsection{Completion of the proofs}
\quad\\ \vskip-3mm

We begin with the proof of Theorem \ref{cha1'}.
Referring to \cite[Theorem 6]{as}, we have
\begin{equation} \label{eq:proof1.1}
\sum_{\substack{\rho' = \beta' + i\gamma',\\ |\gamma'| \leq t_q}} (\beta' - 1/2)
\ll m^{1/2}\log{q}.
\end{equation}
This also implies that when $2\leq T<t_q$, 
$$
\sum_{\substack{\rho' = \beta' + i\gamma',\\ |\gamma'| \leq T}} (\beta' - 1/2)
\ll m^{1/2}\log{q}.
$$

Next, we estimate
$$
\sum_{\substack{\rho' = \beta' + i\gamma',\\ t_q<|\gamma'|\leq T}} (\beta' - 1/2).
$$
We divide the proof in two cases.\\

\underline{\textbf{Case 1:}} For $T \geq t_q$ which satisfies $L(\sigma \pm iT, \chi) \neq 0, L'(\sigma \pm iT, \chi) \neq 0$ for all $\sigma\in\mathbb{R}$. \vskip1mm

In this case, we apply Proposition \ref{prop2'} and provoke Lemmas \ref{mon13.2.11}, \ref{lem3'}, and \ref{lem5'} to obtain the error term.

We apply Lemmas \ref{mon13.2.11}, \ref{lem5'}, and \ref{lem3'} to obtain
$$
\int_{1/2}^{1/2+(\log{(q\tau)})^{-1}} \arg{L(\sigma\pm i\tau,\chi)} d\sigma \ll 1,
$$
$$
\int_3^{a_q} \arg{\frac{G_1}{L}(\sigma\pm i\tau,\chi)} d\sigma \ll m\log{m}
$$
for $\tau \geq t_q$,
and
$$
\int_{1/2}^{1/2+(\log{(qt_q)})^{-1}} \arg{G_1(\sigma\pm it_q,\chi)} d\sigma \ll \frac{m^{1/2}}{(\log{\log{q}})^{1/2}},
$$
$$
\int_{1/2}^{1/2+(\log{(qT)})^{-1}} \arg{G_1(\sigma\pm iT,\chi)} d\sigma \ll \frac{m^{1/2}}{(\log{\log{(qT)}})^{1/2}},
$$
$$
\int_{1/2+(\log{(qt_q)})^{-1}}^3 \arg{\frac{G_1}{L}(\sigma\pm it_q,\chi)} d\sigma \ll m^{1/2}(\log{\log{q}})^2 + m\log{\log{q}},
$$
$$
\int_{1/2+(\log{(qT)})^{-1}}^3 \arg{\frac{G_1}{L}(\sigma\pm iT,\chi)} d\sigma \ll m^{1/2}(\log{\log{(qT)}})^2 + m\log{\log{(qT)}}.
$$

Inserting the above estimates into the formula given in Proposition \ref{prop2'} and adding this to \eqref{eq:proof1.1}, we obtain Theorem \ref{cha1'} for Case 1. \\

\underline{\textbf{Case 2:}} For $T \geq t_q$ such that any of
$L(\sigma+iT,\chi)\neq0$, $L(\sigma-iT,\chi)\neq0$, $L'(\sigma+iT,\chi)\neq0$, or $L'(\sigma-iT,\chi)\neq0$
is \textit{not} satisfied for some $\sigma\in\mathbb{R}$. \vskip1mm

In this case, we have some increment in our sum as much as
$$
\sum_{\substack{\rho' = \beta' + i\gamma',\\ |\gamma'| = T}} (\beta' - 1/2).
$$
Now we estimate this and show that this increment can be included in the error term.
First we look for some small $0 < \epsilon < (\log{\log{(qT)}})^{-1}$ such that
$L(\sigma \pm i(T\pm\epsilon), \chi)\neq0, L'(\sigma \pm i(T\pm\epsilon), \chi)\neq0$
holds for all $\sigma\in\mathbb{R}$ and apply the method of Case 1.
Then we find that the increment
\begin{align*}
\sum_{\substack{\rho' = \beta' + i\gamma',\\ |\gamma'| = T}} (\beta' - 1/2)
&\ll \left|\sum_{\substack{\rho' = \beta' + i\gamma',\\ T-\epsilon < |\gamma'| \leq T+\epsilon}} (\beta' - 1/2)\right| \\
&= \left|\sum_{\substack{\rho' = \beta' + i\gamma',\\ |\gamma'| \leq T+\epsilon}} (\beta' - 1/2) -\sum_{\substack{\rho' = \beta' + i\gamma',\\ |\gamma'| \leq T-\epsilon}} (\beta' - 1/2)\right| \\
&\ll m^{1/2}(\log{\log{(qT)}})^2 + m\log{\log{(qT)}} + m^{1/2}\log{q}
\end{align*}
can be included in the error term.
So we also have Theorem \ref{cha1'} for this case.
\qed \\


To complete the proof of Theorem \ref{cha2'}, as in the proof of Theorem \ref{cha1'}, we also consider two cases.
In the first case, for $T \geq 2$ which satisfies $L(\sigma \pm iT, \chi) \neq 0, L'(\sigma \pm iT, \chi) \neq 0$ for all $\sigma\in\mathbb{R}$, the error terms are estimated as follows:
From Lemma \ref{lem5'}, we have
$$
\arg{G_1\left( \frac{1}{2} \pm it_q, \chi \right)} = O\left( \frac{m^{1/2} \log{q}}{(\log{\log{q}})^{1/2}} \right)
$$
and
$$
\arg{G_1\left( \frac{1}{2} \pm iT, \chi \right)} = O\left( \frac{m^{1/2} \log{(qT)}}{(\log{\log{(qT)}})^{1/2}} \right).
$$
From Lemma \ref{mon13.2.11}, we have
$$
\arg{L\left( \frac{1}{2} \pm it_q, \chi \right)} = O\left( \frac{\log{q}}{\log{\log{q}}} \right), \quad \arg{L\left( \frac{1}{2} \pm iT, \chi \right)} = O\left( \frac{\log{(qT)}}{\log{\log{(qT)}}} \right).
$$

Therefore,
$$
N_1(T,\chi) = \frac{T}{\pi} \log{\frac{qT}{2m\pi}} - \frac{T}{\pi} + O\left( \frac{m^{1/2} \log{(qT)}}{(\log{\log{(qT)}})^{1/2}} \right) + O(m^{1/2}\log{q})
$$
for this case.

In the second case, we consider for $T \geq 2$ such that any of
$L(\sigma+iT,\chi)\neq0$, $L(\sigma-iT,\chi)\neq0$, $L'(\sigma+iT,\chi)\neq0$, or $L'(\sigma-iT,\chi)\neq0$
is \textit{not} satisfied for some $\sigma\in\mathbb{R}$.
Similar to the proof of Theorem \ref{cha1'}, we look for some small $0 < \epsilon < (\log{(qT)})^{-1}$ such that
$L(\sigma \pm i(T\pm\epsilon), \chi)\neq0, L'(\sigma \pm i(T\pm\epsilon), \chi)\neq0$
holds for all $\sigma\in\mathbb{R}$.
Applying the method of the first case we obtain
\begin{equation} \label{eq:n1t_eps}
\begin{aligned}
N_1(T \pm \epsilon,\chi)
&= \frac{T \pm \epsilon}{\pi} \log{\frac{q(T \pm \epsilon)}{2m\pi}} - \frac{T \pm \epsilon}{\pi}
+ O\left( \frac{m^{1/2} \log{(qT)}}{(\log{\log{(qT)}})^{1/2}} \right) \\
&\quad\quad+ O(m^{1/2}\log{q}).
\end{aligned}
\end{equation}
Noting the inequalities
$$
N_1(T-\epsilon,\chi) \leq N_1(T,\chi) \leq N_1(T-\epsilon,\chi) + \left( N_1(T+\epsilon,\chi) - N_1(T-\epsilon,\chi) \right),
$$
from \eqref{eq:n1t_eps} we can easily deduce
$$
N_1(T,\chi) = \frac{T}{\pi} \log{\frac{qT}{2m\pi}} - \frac{T}{\pi} + O\left( \frac{m^{1/2} \log{(qT)}}{(\log{\log{(qT)}})^{1/2}} \right) + O(m^{1/2}\log{q})
$$
for this case.
\qed


\section*{Acknowledgements}

The author would like to express her gratitude to Prof. Kohji Matsumoto, Prof. Hirotaka Akatsuka, and Mr. Yuta Suzuki for their valuable advice.




\bibliographystyle{amsalpha}

\end{document}